\newtheorem{assumption}{Assumption}
\crefname{assumption}{Assumption}{Assumptions}
\Crefname{assumption}{Assumption}{Assumptions}
\newcommand{\stkout}[1]{\textcolor{red}{\ifmmode\text{\sout{\ensuremath{#1}}}\else\sout{#1}\fi}}
\DeclareSymbolFont{largesymbols}{OMX}{cmex}{m}{n} %
\newcommand{\numsample}{m}
\newcommand{\numpixel}{N}
\newcommand{\A}{\mathbf{A}}
\newcommand{\mphi}{\mathbf{\Phi}}
\newcommand{\DictionaryMat}{{\boldsymbol{\Phi}}}
\newcommand{\G}{\mathbf{G}}
\newcommand{\DictionaryMatOne}{\mphi_1}
\newcommand{\DictionaryMatTwo}{\mphi_2}
\newcommand{\DictionaryPsiOne}{\mathbf{\Psi}_1}
\newcommand{\DictionaryPsiTwo}{\mathbf{\Psi}_2}
\DeclareMathOperator{\Real}{Re}
\DeclareMathOperator{\identityMapping}{id}
\DeclareMathOperator{\prox}{prox}
\DeclareMathOperator*{\argmin}{arg\,min}
\providecommand{\abs}[1]{\left\lvert#1\right\rvert}
\providecommand{\Abs}[1]{\left\lVert#1\right\rVert}
\providecommand{\brac}[1]{\left(#1\right)}
\providecommand{\scp}[2]{\left\langle#1\,\middle\vert\,#2\right\rangle}
\theoremstyle{thmstyletwo}%
\newtheorem{theorem}{Theorem}%
\newtheorem{proposition}{Proposition}%
\newtheorem{remark}{Remark}%
\newtheorem{definition}{Definition}
\newtheorem{corollary}{Corollary}
\newtheorem{lemma}{Lemma}
\numberwithin{equation}{section}
\definecolor{o}{RGB}{255,127,14}
\definecolor{b}{RGB}{31,119,180}
\begin{document}

\DOI{DOI HERE}
\copyrightyear{2025}
\vol{00}
\pubyear{2021}
\access{Advance Access Publication Date: Day Month Year}
\appnotes{Paper}
\copyrightstatement{}
\firstpage{1}

\title[A generalization bound for exit wave reconstruction via deep unfolding]{A generalization bound for\\exit wave reconstruction via deep unfolding}

\author{Moussa Atwi* and Benjamin Berkels\ORCID{0000-0002-6969-187X}
\address{\orgdiv{IGPM}, \orgname{RWTH Aachen University}, \orgaddress{\street{Schinkelstr. 2}, \postcode{52062 Aachen}, \country{Germany}}}}

\authormark{Moussa Atwi and Benjamin Berkels}

\corresp[*]{Corresponding author: \href{email:atwi@ssd.rwth-aachen.de}{atwi@ssd.rwth-aachen.de}}

\received{Date}{0}{Year}
\revised{Date}{0}{Year}
\accepted{Date}{0}{Year}

\abstract{Transmission Electron Microscopy enables high-resolution imaging of materials, but the resulting images are difficult to interpret directly. 
One way to address this is exit wave reconstruction, i.e., the recovery of the complex-valued electron wave at the specimen's exit plane from intensity-only measurements. This is an inverse problem with a nonlinear forward model.
We consider a simplified forward model, making the problem equivalent to phase retrieval, and propose a discretized regularized variational formulation. To solve the resulting non-convex problem, we employ the proximal gradient algorithm (PGA) and unfold its iterations into a neural network, where each layer corresponds to one PGA step with learnable parameters. This unrolling approach, inspired by LISTA, enables improved reconstruction quality, interpretability, and implicit dictionary learning from data. We analyze the effect of parameter perturbations and show that they can accumulate exponentially with the number of layers $L$. Building on proof techniques of Behboodi et al., originally developed for LISTA, i.e., for a linear forward model, we extend the analysis to our nonlinear setting and establish generalization error bounds of order $\mathcal{O}(\sqrt{L})$. Numerical experiments support the exponential growth of parameter perturbations.
}

\keywords{Phase retrieval; TEM imaging; sparse reconstruction; deep unfolding; proximal gradient algorithm; generalization error; Rademacher complexity.}

\maketitle

\section{Introduction}

Transmission Electron Microscopy (TEM) is a microscopy technique in which two-dimensional images are created by recording a beam of electrons passing through a thin sample (specimen). After the electrons interact with the material and leave the exit plane of the specimen, they go through a system of electromagnetic lenses and finally their squared amplitude is recorded, forming a TEM image.
However, the direct interpretation of TEM images is difficult because they are blurred by aberrations from the lenses and further affected by the loss of phase information. One possible way to deal with these limitations is to reconstruct the electron wave at the exit plane of the specimen, which is known as the \emph{exit wave} \cite{CoThOp96}. Exit wave reconstruction provides access to phase information that is otherwise lost in direct TEM imaging.
This results in a nonlinear inverse problem that can be seen as a generalized variant of phase retrieval.
A range of strategies has been developed for this task, from early focus-variation approaches addressing both linear and nonlinear imaging effects \cite{CoThOp96} to exit wave reconstruction using the transport of intensity equation combined with self-consistent propagation \cite{HsChKa04}. More recent work introduced a variational framework for exit wave reconstruction \cite{DoBe19} generalizing some of the older approaches, others proposed improved focal-series algorithms that incorporate noise suppression and allow tracking structural evolution in nanomaterials \cite{ZhChWa24}.

The variational approach from \cite{DoBe19} forms the starting point for us.
We consider a simplified forward model where the nonlinearity is just element-wise phase loss.
Despite the simplification, this is still a challenging nonlinear inverse problem.
To address it, we use the proximal gradient algorithm (PGA) to solve the regularized variational problem. Building on advances in algorithm unrolling, we unfold the iterative PGA into a neural network architecture inspired by the Iterative Shrinkage-Thresholding Algorithm (ISTA) and its learned counterpart LISTA. This unfolding maps each iteration to a network layer with learnable parameters, providing a structured and interpretable model that combines traditional optimization with deep learning \cite{GrLe10,MoLiEl21}.

A critical aspect here is understanding the generalization ability of the unfolded network, i.e., how well a learned neural network performs on unseen data. Generalization bounds provide probabilistic estimates of the gap between the true error with respect to the unknown distribution and the empirical error on the training set, and thus form the theoretical basis of our analysis. Using tools from statistical learning theory \cite{BeRaSc22,Ta21}, particularly building on the strategy to derive generalization bounds for LISTA from \cite{BeRaSc22}, we derive such a bound for our exit wave reconstruction variant.
The bound has the form%
\[
\mathcal{L}(h) - \hat{\mathcal{L}}(h) \;\lesssim\; \sqrt{\frac{N^2}{m}} (1+\sqrt{K})\sqrt{L},
\]
where $L$ is the depth of the network, $N^2$ the number if trainable parameters, $KN^2$ the number of measurements and $m$ the number of samples.

\section{Forward Model and Variational Approach}

As described in the introduction, exit wave reconstruction aims to recover the complex-valued exit wave from intensity-only measurements recorded at different defocus levels. The following presents both the full forward model and a simplified version suitable for deep unfolding, leading to a variational approach.

\subsection{A Simplified Exit Wave Model - Phase Retrieval}

\paragraph{Full forward model:} Let $\Psi \in L^2(\mathbb{R}^d;\mathbb{C})$ denote the complex-valued exit wave, $T_j \in L^\infty(\mathbb{R}^d \times \mathbb{R}^d;\mathbb{C})$ the transmission cross-coefficient (TCC). We assume a sequence of real-valued TEM images $(\tilde{g}_j)_{j=1}^K$ has been recorded at different defocus levels. Our objective is to reconstruct $\Psi$ from these intensity-only measurements. This means to find $\Psi$ such that
\[
\Psi \star_{T_j} \Psi = \mathcal{F}(\tilde{g}_j)\text{ for }j=1,\ldots K.
\]
Here, \(\Psi \star_{T_j} \Psi\) is the weighted auto-correlation, i.e.,
\[
(\Psi \star_{T_j} \Psi)(x) = \int_{\mathbb{R}^d} \Psi(x+y)\, T_j(x+y, y)\, \Psi^*(y) \, \mathrm{d}y.
\]
This gives rise to the objective functional
\[
\mathcal{E}[\Psi] = \frac{1}{K} \sum_{j=1}^K \left\| \Psi \star_{T_j} \Psi - \mathcal{F}(\tilde{g}_j) \right\|_{L^2}^2 + \mathcal{R}[\Psi],
\]
where $\mathcal{R}[\Psi]$ is a regularization term.  
\paragraph{Simplified forward model:} To reduce computational complexity, we assume the TCCs are separable, i.e., that there exist weighting functions $w_j \in L^\infty(\mathbb{R}^d;\mathbb{C})$ such that
$
T_j(x, y) = \overline{w_j(x)} w_j(y).
$
Under this assumption, the weighted auto-correlation simplifies to
$
\Psi \star_{T_j} \Psi = (\Psi w_j) \star (\Psi w_j),
$
and using the convolution theorem
$
\mathcal{F}^{-1}(\Psi \star_{T_j} \Psi) = \left| \mathcal{F}^{-1}(\Psi w_j) \right|^2,
$
we get the simplified forward model
\[
\tilde{g}_j = \left| \mathcal{F}^{-1}(\Psi w_j) \right|^2
\]
and the simplified objective functional
\[
\mathcal{E}[\Psi] = \frac{1}{K} \sum_{j=1}^K \left\| \left| \mathcal{F}^{-1}(\Psi w_j) \right|^2 - \tilde{g}_j \right\|_{L^2}^2 + \mathcal{R}[\Psi].
\]
Despite being simplified compared to the original exit wave reconstruction problem, this is still an inverse problem with a nonlinear forward model.

\subsection{Discretization}

To solve the variational problem numerically, we discretize the domain using $\numpixel$ spatial nodes. The continuous exit wave becomes a vector $\pmb{\psi} \in \mathbb{C}^\numpixel$, and each image $\tilde{g}_j$ becomes a vector $\tilde{\mathbf{g}}_j \in \mathbb{R}^\numpixel$.

We define measurement operators $A_j \in \mathbb{C}^{\numpixel \times \numpixel}$ representing an inverse discrete Fourier transform followed by pointwise multiplication with the weighting function
\[
A_j\pmb{\psi} = \mathcal{F}_d^{-1}(\pmb{\psi} \odot w_j),
\]
where \(\odot\) denotes element-wise multiplication.
The discretized objective function becomes:
\[
\tilde{\mathcal{E}}(\pmb{\psi}) = \tilde{\mathcal{D}}(\pmb{\psi}) + \mathcal{R}(\pmb{\psi})
\text{, with data term }
\tilde{\mathcal{D}}(\pmb{\psi}) := \frac{1}{2K\numpixel} \sum_{j=1}^K \left\| \tilde{\varphi}(A_j \pmb{\psi}) - \tilde{\mathbf{g}}_j \right\|_2^2.
\]
Here, $\tilde{\varphi} : \mathbb{C} \to \mathbb{R}, \,\tilde{\varphi}(z) = |z|^2$, is applied element-wise, i.e.,
\[
\tilde{\varphi}(\mathbf{z}) := \left( \tilde{\varphi}(z_1), \dots, \tilde{\varphi}(z_{K\numpixel}) \right), \quad \mathbf{z} \in \mathbb{C}^{K\numpixel}.
\]
By stacking the measurement operators and image vectors, i.e.,
\begin{equation}\label{ktimes}
A = \begin{pmatrix}
A_1 \\ \vdots \\ A_K
\end{pmatrix} \in \mathbb{C}^{K\numpixel \times \numpixel}, \quad
\tilde{\mathbf{g}} = \begin{pmatrix}
\tilde{\mathbf{g}}_1 \\ \vdots \\ \tilde{\mathbf{g}}_K
\end{pmatrix} \in \mathbb{R}^{K\numpixel},
\end{equation}
the data term can be written compactly as
\[
\tilde{\mathcal{D}}(\pmb{\psi}) := \frac{1}{2K\numpixel} \left\| \tilde{\varphi}(A \pmb{\psi}) - \tilde{\mathbf{g}} \right\|_2^2.
\]

\subsection{Variational Perspective}

We introduce a general transformation strategy to improve robustness. Let $\gamma \in C^1([0, \infty), [0, \infty))$ be a strictly increasing function, and define
\[
\varphi := \gamma \circ \tilde{\varphi}, \quad \mathbf{g} := \gamma(\tilde{\mathbf{g}}),
\]
so that
\[
\varphi(\mathbf{A} \pmb{\psi}) = \mathbf{g}.
\]
A possible choice is $\gamma(x) = \sqrt{\abs{x} + \delta^2} - \delta, \quad \delta > 0$. In this case,
\[
\varphi(x) = \sqrt{\abs{x}^2 + \delta^2} - \delta, \quad \delta > 0.
\]
i.e., $\varphi$ is the scaled Pseudo Huber transformation. It has been observed that the smoothing of the absolute value done here not only improves the theoretical properties of the objective function, but can also improve the practical performance of reconstruction algorithms \cite{XiWaGu21}.

The resulting smoothed energy functional is:
We write the energy as:
\begin{equation}
\label{eq:EWObjective}
\mathcal{E}[\pmb{\psi}] = \mathcal{D}(\pmb{\psi}) + \mathcal{R}(\pmb{\psi}),
\end{equation}
where the data term is
\[
\mathcal{D}(\pmb{\psi}) := \frac{1}{2KN} \|\varphi(\mathbf{A} \pmb{\psi}) - \mathbf{g}\|_2^2,
\]
and $\mathcal{R}(\pmb{\psi})$ is a convex regularizer, e.g., promoting sparsity via $\ell_1$-norm.

\paragraph{Optimization via Proximal Gradient Method}

The minimization is performed using the proximal gradient method, adapted to complex variables via the Wirtinger derivative:
\[
\pmb{\psi}^{k+1} = \operatorname{prox}_{\tau_k \mathcal{R}} \left( \pmb{\psi}^k - \tau_k \nabla \mathcal{D}(\pmb{\psi}^k) \right),
\]
with gradient
\[
\nabla \mathcal{D}(\pmb{\psi}) = \frac{1}{KN} \overline{\mathbf{A}^\intercal} \left( \varphi(\mathbf{A} \pmb{\psi}) - \mathbf{g} \right) \odot \varphi'(\mathbf{A} \pmb{\psi}).
\]
Since the $\mathcal{D}$ is real-valued, its Wirtinger derivative coincides with the standard (real) derivative when interpreting $\mathbb{C}$ as $\mathbb{R}^2$.

\subsection{Deep Unfolding}

Deep unfolding, or algorithm unrolling, is a technique that transforms classical iterative algorithms into trainable neural networks and introduces parameters that can be learned from data, resulting in a hybrid model that leverages both domain knowledge and learning capabilities \cite{MoLiEl21}.

Traditional deep networks, while powerful, often operate as black boxes with complex structures that make their decision-making difficult to interpret. Their performance heavily relies on large, high-quality training datasets, which are often expensive to acquire, especially in fields such as medical imaging or electron microscopy. In contrast, classical iterative algorithms are transparent and theoretically grounded, offering a clear link between algorithm steps and the problem structure.

By unrolling such algorithms into neural networks, we retain interpretability and domain fidelity, while benefiting from the hybrid model's faster convergence and parameter efficiency. Here, the convergence speed improvement is relative to the baseline iterative method, and the reduced parameter count arises from the structured, hybrid design rather than from deep learning alone. Each layer of the network mimics one iteration, and the resulting model can be trained end-to-end on real data. This allows the network to generalize well even with limited data and provides a more explainable and data-efficient alternative to standard deep architectures.

The foundation of deep unfolding was notably laid by the unrolling of ISTA (Iterative Shrinkage-Thresholding Algorithm), originally formulated in \cite{DaDeMo04}, and later unrolled by Gregor and LeCun in their introduction of Learned ISTA (LISTA) \cite{GrLe10}. In their experimental study, they demonstrated that LISTA significantly outperformed the classical ISTA algorithm in terms of reconstruction speed and accuracy. This substantial improvement in performance was primarily attributed to the unique network architecture enabled by the unrolling process itself, which reinterprets the optimization dynamics as a trainable feed-forward structure. Another notable example of algorithm unrolling is in blind image deblurring, where the iterative steps of a Half-Quadratic Splitting algorithm for gradient-domain deblurring are unrolled to construct a deep network, referred to as DUBLID \cite{LiToMo19}. In this approach, each iteration corresponds to a network layer, and key parameters such as filters and thresholds are learned from data, resulting in both interpretable and computationally efficient deblurring. Similarly, the Deep Griffin--Lim Iteration (DeGLI) framework for phase reconstruction \cite{MaYaKo21} unrolls the classical Griffin-Lim algorithm, with each iteration augmented by a trainable residual DNN block. %
Another significant contribution to the field was made by Hershey et al. \cite{HeRoWe14}, who formalized a general deep unfolding framework. They demonstrated how iterative model-based inference algorithms, such as belief propagation and non-negative matrix factorization, can be unrolled into trainable deep network architectures. %

\subsection{Unrolling the Proximal Gradient Algorithm into a Neural Network}\label{sec: unrolling}

We now apply deep unfolding to the proximal gradient algorithm applied for the minimization of \eqref{eq:EWObjective}, i.e., for variational exit wave reconstruction as introduced above.

We model each exit wave $\pmb{\psi} \in \mathbb{C}^N$ as a sparse signal with respect to an (unknown) dictionary in the form of a unitary matrix $\mphi_0\in \mathcal{U}(N)$, i.e., $\pmb{\psi} = \mphi_0 \mathbf{z}$ for some sparse vector $\mathbf{z} \in \mathbb{C}^N$. Here, $\mathcal{U}(N) := \left\{ \mathbf{U} \in \mathbb{C}^{N \times N} : \overline{\mathbf{U}}^\top \mathbf{U} = \mathbf{I} \right\}$ is the set of unitary $N \times N$ matrices.
We assume that this sparse generative model holds across the training data, i.e., for training samples  $\big((\pmb{\psi}_j, \mathbf{g}_j)\big)_{j=1}^m$, each fulfills $\pmb{\psi}_j = \mphi_0 \mathbf{z}_j$ and $\mathbf{g}_j = \varphi(\mathbf{A} \pmb{\psi}_j)$.
The corresponding loss function measures how well the reconstruction of $\pmb{\psi}_j$ from the estimated sparse code and dictionary matches the true exit wave.
To obtain the sparse codes $\mathbf{z}_j$ for one exit wave, only the measurements $\mathbf{g}_j$ are used. In contrast, the dictionary $\mphi$ is learned from the full training set  $\big((\pmb{\psi}_j, \mathbf{g}_j)\big)_{j=1}^m$.

We design an unrolled neural network architecture that mimics $L$ iterations of the proximal gradient algorithm, replacing the unknown fixed dictionary $\mphi_0$ by a trainable parameter $\mphi\in\mathcal{U}(N)$. Each iteration is referred to as a \emph{stage}, consisting of a gradient descent step and an application of the proximal operator. Each stage maps an input sparse code estimate to a refined estimate, guided by the observed intensity data $\mathbf{g} \in \mathbb{R}^{KN}$ and the learned dictionary $\mphi$.

We assume that signals $\pmb{\psi}$ are bounded in the $\ell_2$-norm by a fixed constant $C_{\text{in}}$, i.e., $\lVert \pmb{\psi} \rVert_2 \leq C_{\text{in}}$. Note that this constant will appear in the generalization bound we are going to derive.

The $\ell$-th stage of the network is defined as:
\begin{equation}
f_{\ell+1}^{\mathbf{g}, \mphi}(\mathbf{z}) := \operatorname{prox}_{\tau_\ell \mathcal{R}} \left( \mathbf{z} - \frac{\tau_\ell}{KN} (\overline{\mathbf{A} \mphi})^\intercal \left( \varphi(\mathbf{A} \mphi \mathbf{z}) - \mathbf{g} \right) \odot \varphi'(\mathbf{A} \mphi \mathbf{z}) \right), \label{eq:unroll_stage}
\end{equation}
Here, by replacing the original signal $\pmb{\psi}$ with a different basis representation $\mphi \mathbf{z}$, we are effectively replacing $A$ by $A \mphi$ in the data term $\mathcal{D}$. The matrix $\mphi \in \mathbb{C}^{N \times N}$ is searched over the space of unitary matrices to best fit the observed measurements.  
The step sizes $\tau_\ell$ are fixed and not learned. $\varphi'$ is the Wirtinger derivative of the real-valued $\varphi$. $\mathcal{R}$ is the regularizer introduced earlier, e.g., the $\ell_1$-norm.

Stacking $L$ such stages yields the full unrolled network:
\begin{equation}
f^L_\mphi(\mathbf{g}) = f_L^{\mathbf{g}, \mphi} \circ f_{L-1}^{\mathbf{g}, \mphi} \circ \dots \circ f_1^{\mathbf{g}, \mphi}(\mathbf{z}_0), \label{eq:unroll_full}
\end{equation}
with initialization $\mathbf{z}_0$ obtained via the spectral initialization from Wirtinger flow, cf. \cite[Algorithm 1]{CaLiSo15}.

To ensure boundedness of the output of the network, the 1-Lipschitz function $\sigma$
\begin{equation}
\sigma(x) =
\begin{cases}\label{sigmacases}
x & \text{if } \lVert x \rVert_2 \leq C_\text{out}, \\
C_\text{out} \dfrac{x}{\lVert x \rVert_2} & \text{otherwise},
\end{cases}
\end{equation}
is applied after mapping the sparse code through the learned dictionary $\mphi$. This way, the network output $\hat{\pmb{\psi}} = \sigma(\mphi f^L_\mphi(\mathbf{g}))$ lies within the $\ell_2$-ball of radius $C_{\text{out}}$.
For the sake of simplicity, we choose $C_\text{out} = C_\text{in}$. %

The associated hypothesis class is
\begin{equation}\label{H: spaces}
\mathcal{H}_1^L = \{ \sigma \circ (\mphi f^L_\mphi) : \mphi \in \mathcal{U}(N) \}, \quad \text{embedded in} \quad \mathcal{H}_2^L = \{ \sigma \circ (\mathbf{\Psi} f^L_\mphi) : \mphi, \mathbf{\Psi} \in \mathcal{U}(N) \}.
\end{equation}

\section{Preliminaries}

The following operator-theoretic definitions and lemmas provide the mathematical foundation to analyze the behavior of the operators underlying our network, in particular those associated with the nonlinearities resulting from the smoothed amplitude $\varphi$. Although these results are stated in real Hilbert spaces in \cite{BaCo17}, they remain valid in complex Hilbert spaces when replacing the inner product that occurs in the statements and definitions for real Hilbert spaces, by the real part of the inner product. These concepts will be used later to analyze the effect of perturbations on the unfolded network iterations.

In the following, let $\mathcal{H}$ be a (real or complex) Hilbert space.
\begin{definition}\label{defFNE} Let $D \subset \mathcal{H}$ nonempty and $T : D \to \mathcal{H}$. \begin{itemize} \item $T$ is called \emph{firmly nonexpansive}, if \begin{equation*} (\forall x \in D)(\forall y \in D)\qquad \Abs{Tx - Ty}^2 + \Abs{(\identityMapping - T)x - (\identityMapping - T)y}^2 \leq \Abs{x - y}^2. \end{equation*} \item Let $\beta \in \mathbb{R}_{++}\coloneqq(0,\infty)$. $T$ is called \emph{$\beta$-cocoercive} (or \emph{$\beta$-inverse strongly monotone}), if \begin{equation*} (\forall x \in D)(\forall y \in D)\qquad \Real\scp{x - y}{Tx - Ty} \geq \beta \Abs{Tx - Ty}^2. \end{equation*} \item Let $L \in \mathbb{R}_+\coloneqq [0,\infty)$. $T$ is called \emph{$L$-Lipschitz continuous}, if \begin{equation*} (\forall x \in D)(\forall y \in D)\qquad \Abs{Tx - Ty} \leq L \Abs{x - y}. \end{equation*} \end{itemize} \end{definition}

The properties defined above are central in analyzing proximal gradient iterations, since the gradient-like and proximal-like steps correspond to operators with Lipschitz and firmly nonexpansive behavior, respectively. By establishing these operator-theoretic properties, we can later show the effect of perturbations on the unrolled network iterations and ensure that the iterates remain bounded.

We summarize several important properties of operators and proximal mappings in $\mathcal{H}$ below.
\begin{proposition}[Properties of cocoercive and proximal operators]\label{prop1} 
  Let $D \subseteq \mathcal{H}$ be nonempty, and let $T : D \to \mathcal{H}$ be an operator. Let $\alpha, \beta \in \mathbb{R}_{++}$ and $f \in \Gamma_0(\mathcal{H})$. Then, 
  \begin{itemize} 
    \item $T$ is firmly nonexpansive if and only if it is $1$-cocoercive. \vspace{0.75em} 
    \item \label{lem:CocoerciveAndScaling} If $T$ is $\beta$-cocoercive, then $\alpha T$ is $\frac{\beta}{\alpha}$-cocoercive. \vspace{0.75em} 
    \item $T$ is $\beta$-cocoercive if and only if $\beta T$ is firmly nonexpansive. \vspace{0.75em} 
    \item If $T$ is $\beta$-cocoercive, then $T$ is $\frac{1}{\beta}$-Lipschitz continuous. \vspace{0.75em} %
    \item \label{lem:ProxFirmlyNonExpansive} $\prox_f : \mathcal{H} \to \mathcal{H}$ is firmly nonexpansive, cf. \cite[Proposition 12.28]{BaCo17}. \vspace{0.75em} 
    \item \label{lem:ProxScalingProperty} By \cite[Proposition 24.8(v)]{BaCo17}%
    , one gets the identity 
    \begin{equation} 
      \alpha\prox_f(x) = \prox_{\alpha^2 f\left(\frac{1}{\alpha} \cdot\right)}(\alpha x) \quad \text{for all } x \in \mathcal{H}.\label{eq:ProxScalingProperty} 
  \end{equation} 
  \item Let $0 \in \argmin_{x \in \mathbb{R}^n} f(x)$. 
  Then, 
    \begin{equation}\label{cor:ProxZeroAtZero} 
      \prox_f(0) = 0, 
    \end{equation} 
    as an immediate consequence of \cite[Proposition 12.29]{BaCo17} and 
    \begin{equation}\label{lem:ZeroMinOfConjugate} 
      0 \in \argmin_{x \in \mathbb{R}^n} f^*(x). 
    \end{equation} 
  \end{itemize} 
\end{proposition}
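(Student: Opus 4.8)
The plan is to prove the seven items essentially independently, each as a short consequence of the definitions in \Cref{defFNE}, with the two $\prox$ statements quoted from \cite{BaCo17}. The one computation underpinning everything is the polarization-type identity
\[
\Abs{(\identityMapping - T)x - (\identityMapping - T)y}^2 = \Abs{x-y}^2 - 2\Real\scp{x-y}{Tx-Ty} + \Abs{Tx-Ty}^2,
\]
which holds in any real or complex Hilbert space because only the real part of the inner product enters; this is exactly what makes the complex case behave like the real one and legitimizes importing the results of \cite{BaCo17}.

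\emph{First item.} Substituting the identity above into the definition of firm nonexpansiveness, the inequality $\Abs{Tx-Ty}^2 + \Abs{(\identityMapping-T)x-(\identityMapping-T)y}^2 \le \Abs{x-y}^2$ collapses to $\Real\scp{x-y}{Tx-Ty} \ge \Abs{Tx-Ty}^2$, i.e.\ to $1$-cocoercivity. \emph{Second item.} Multiply the cocoercivity inequality for $T$ by $\alpha > 0$ and regroup: $\Real\scp{x-y}{\alpha Tx - \alpha Ty} = \alpha\Real\scp{x-y}{Tx-Ty} \ge \alpha\beta\Abs{Tx-Ty}^2 = \tfrac{\beta}{\alpha}\Abs{\alpha Tx - \alpha Ty}^2$. \emph{Third item.} Chain the first two: applying the second item with factor $\beta$ shows $\beta$-cocoercivity of $T$ implies $1$-cocoercivity of $\beta T$, hence (first item) firm nonexpansiveness of $\beta T$; conversely, if $\beta T$ is firmly nonexpansive it is $1$-cocoercive, and the second item with factor $1/\beta$ applied to $\beta T$ gives back $\beta$-cocoercivity of $T$. \emph{Fourth item.} Bound the left-hand side of the cocoercivity inequality by Cauchy--Schwarz, $\beta\Abs{Tx-Ty}^2 \le \Real\scp{x-y}{Tx-Ty} \le \Abs{x-y}\,\Abs{Tx-Ty}$, and divide by $\beta\Abs{Tx-Ty}$, the case $Tx=Ty$ being trivial.

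For the fifth and sixth items I would simply cite the references already indicated: firm nonexpansiveness of $\prox_f$ is \cite[Proposition 12.28]{BaCo17}, and the scaling identity \eqref{eq:ProxScalingProperty} is \cite[Proposition 24.8(v)]{BaCo17}, noting that $\Gamma_0(\mathcal{H})$ and the proximal operator carry over to complex $\mathcal{H}$ via identification with the underlying real Hilbert space. For the last item, I would use the Fermat rule $p = \prox_f(x) \iff x-p \in \partial f(p)$: with $x=0$ this says $0 \in \partial f(\prox_f 0)$, and since $0 \in \argmin f$ is equivalent to $0 \in \partial f(0)$, the point $0$ is a minimizer and hence, by uniqueness of the prox, $\prox_f(0)=0$. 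The auxiliary claim \eqref{lem:ZeroMinOfConjugate} follows from Fenchel--Young: $f^*(0) = \sup_x(-f(x)) = -\inf f = -f(0)$, while $f^*(y) \ge \scp{y}{0} - f(0) = -f(0)$ for all $y$, so $0 \in \argmin f^*$; this is the route matching the reference to \cite[Proposition 12.29]{BaCo17}.

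I do not anticipate a real obstacle: each item is a one- or two-line consequence of \Cref{defFNE} or a direct citation. The only point needing care—and essentially the reason the proposition is stated at all—is transferring the real-Hilbert-space formulations of \cite{BaCo17} to the complex setting, which is handled uniformly by observing that all inequalities involve $\Real\scp{\cdot}{\cdot}$ and that $\mathbb{C}^N$ equipped with $\Real\scp{\cdot}{\cdot}$ is a real Hilbert space isometric to $\mathbb{R}^{2N}$.
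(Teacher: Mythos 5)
Your proposal is correct. Note, however, that the paper does not actually prove this proposition: it is stated as a summary of standard facts, with each item justified only by a citation to Bauschke--Combettes (\cite[Propositions 12.28, 24.8(v), 12.29]{BaCo17} and the surrounding cocoercivity results) together with the blanket remark that the real-Hilbert-space statements carry over once the inner product is replaced by its real part. What you do differently is supply the elementary derivations: the polarization identity $\Abs{(\identityMapping-T)x-(\identityMapping-T)y}^2=\Abs{x-y}^2-2\Real\scp{x-y}{Tx-Ty}+\Abs{Tx-Ty}^2$ for the equivalence of firm nonexpansiveness and $1$-cocoercivity, the rescaling computation, Cauchy--Schwarz for the Lipschitz bound, and the variational characterization of the prox for the last item. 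This buys something the paper only asserts, namely an explicit verification that every inequality involves only $\Real\scp{\cdot}{\cdot}$ and hence survives the passage to complex $\mathcal{H}$ viewed as a real Hilbert space. One small slip: with $x=0$ the Fermat rule gives $-\prox_f(0)\in\partial f(\prox_f(0))$, not $0\in\partial f(\prox_f(0))$ as you write; your conclusion is nevertheless valid because you then argue the other way, checking that $p=0$ satisfies $0-0\in\partial f(0)$ (equivalent to $0\in\argmin f$) and invoking uniqueness of the proximal point. Your Fenchel--Young argument for \eqref{lem:ZeroMinOfConjugate} is also correct.
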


\begin{proposition}[Real variant: \cite{BaCo17}, Proposition 4.12]\label{mainprop} Let $I$ be a finite set. For every $i\in I$, let $\mathcal{K}_i$ be real or complex Hilbert space, $L_i \in \mathcal{B}(\mathcal{H}, \mathcal{K}_i)\setminus\{0\}$, $\beta_i \in \mathbb{R}_{++},$ and $T_i: \mathcal{K}_i \to \mathcal{K}_i$ be $\beta_i$-cocoercive. Moreover, let \[ T = \displaystyle\sum_{i\in I}^{} L_i^* T_i L_i \;\;\text{and}\;\; \beta = \frac{1}{\displaystyle\sum_{i\in I}^{}\frac{\Abs{L_i}^2}{\beta_i}}. \] Then, $T$ is $\beta$-cocoercive. 
\end{proposition}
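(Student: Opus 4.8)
\emph{Proof plan.} The plan is to verify the cocoercivity inequality from \Cref{defFNE} directly. I would fix $x, y \in \mathcal{H}$ and abbreviate $v_i := T_i(L_i x) - T_i(L_i y) \in \mathcal{K}_i$ for each $i \in I$, so that $Tx - Ty = \sum_{i\in I} L_i^* v_i$. Using linearity together with the defining property of the adjoint (and, in the complex case, the identity $\Real\scp{u}{L_i^* w} = \Real\scp{L_i u}{w}$, valid under the real-part inner product convention fixed before the proposition), I would rewrite
\[
\Real\scp{x-y}{Tx-Ty} = \sum_{i\in I} \Real\scp{L_i x - L_i y}{v_i}.
\]

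Next I would estimate this quantity from both sides in terms of the weighted sum $S := \sum_{i\in I}\beta_i\Abs{v_i}^2$. For the lower bound, applying $\beta_i$-cocoercivity of $T_i$ to the pair $L_i x, L_i y$ gives $\Real\scp{L_i x - L_i y}{v_i} \geq \beta_i\Abs{v_i}^2$, hence $\Real\scp{x-y}{Tx-Ty} \geq S$. For the matching upper bound on $\Abs{Tx-Ty}^2$, I would use the triangle inequality and $\Abs{L_i^*} = \Abs{L_i}$ to get $\Abs{Tx-Ty} \leq \sum_{i\in I}\Abs{L_i}\Abs{v_i}$, then split each term as $\Abs{L_i}\Abs{v_i} = \tfrac{\Abs{L_i}}{\sqrt{\beta_i}}\cdot\sqrt{\beta_i}\Abs{v_i}$ and apply the Cauchy--Schwarz inequality for finite sums:
\[
\Abs{Tx-Ty}^2 \leq \Brac{\sum_{i\in I}\frac{\Abs{L_i}^2}{\beta_i}}\Brac{\sum_{i\in I}\beta_i\Abs{v_i}^2} = \frac{1}{\beta}\, S.
\]
Combining the two bounds yields $\Real\scp{x-y}{Tx-Ty} \geq S \geq \beta\Abs{Tx-Ty}^2$, which is exactly $\beta$-cocoercivity of $T$; here $\beta > 0$ because $I$ is finite, every $\beta_i > 0$, and every $L_i \neq 0$ so that $\Abs{L_i} > 0$.

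I do not expect a serious obstacle: the computation is short and essentially self-contained. The one step that needs a moment's thought is the choice of splitting in the Cauchy--Schwarz estimate, since it is precisely what produces the harmonic-type constant $\beta = \big(\sum_{i\in I}\Abs{L_i}^2/\beta_i\big)^{-1}$ rather than a cruder bound; a naive pairing of factors would still give cocoercivity but with a worse constant. A minor bookkeeping point is to keep $\Real\scp{\cdot}{\cdot}$ throughout so that the real and complex cases are handled by the same lines, which is legitimate given the conventions stated just before the proposition.
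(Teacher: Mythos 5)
Your proof is correct. The paper does not prove this proposition itself---it simply cites \cite{BaCo17}, Proposition 4.12, together with the blanket remark preceding the statement that the real-Hilbert-space results carry over by replacing inner products with their real parts---and your direct verification (adjoint identity, $\beta_i$-cocoercivity of each $T_i$, then the weighted Cauchy--Schwarz split producing the harmonic constant $\beta$) is precisely the standard argument behind that citation, correctly adapted to the complex setting.
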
 

\begin{corollary}[Real variant: \cite{BaCo17}, Corollary 4.13]\label{LTLfn}
  Let $\mathcal{H}, \mathcal{K}$ be a real or complex Hilbert spaces, let $T: \mathcal{K} \to \mathcal{K}$ be firmly nonexpansive, and let $L \in \mathcal{B(\mathcal{H}, \mathcal{K})}$ be such that $\lVert L \rVert \leq 1$. Then, $L^* T L$ is firmly nonexpansive.
\end{corollary}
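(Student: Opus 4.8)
The plan is to reduce the claim to the cocoercivity calculus already assembled in \Cref{prop1,mainprop}. First I would recall that by the first item of \Cref{prop1}, "$T$ firmly nonexpansive" is equivalent to "$T$ is $1$-cocoercive", i.e.\ $\Real\scp{x-y}{Tx-Ty} \ge \Abs{Tx-Ty}^2$ for all $x,y \in \mathcal{K}$. So it suffices to show that $L^*TL$ is $1$-cocoercive, and then translate back.

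For the case $L \ne 0$, the natural move is to apply \Cref{mainprop} with the singleton index set $I = \{1\}$, $\mathcal{K}_1 = \mathcal{K}$, $L_1 = L$, $T_1 = T$, and $\beta_1 = 1$ (admissible since $L \ne 0$ and $T$ is $1$-cocoercive). The proposition then yields that $L^*TL$ is $\beta$-cocoercive with $\beta = 1/\Abs{L}^2$. Since $\Abs{L} \le 1$ we get $\beta \ge 1$, and a $\beta$-cocoercive operator with $\beta \ge 1$ is in particular $1$-cocoercive (decreasing the constant only weakens the defining inequality), hence firmly nonexpansive by \Cref{prop1}.

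A slightly more self-contained variant, which also avoids the case distinction, is to argue directly: for any $x, y \in \mathcal{H}$, linearity and the adjoint relation give
\[
\Real\scp{x-y}{L^*TLx - L^*TLy} = \Real\scp{L(x-y)}{TLx - TLy} \ge \Abs{TLx - TLy}^2,
\]
where the inequality is $1$-cocoercivity of $T$ applied at the points $Lx, Ly \in \mathcal{K}$. Since $\Abs{L^*} = \Abs{L} \le 1$, one has $\Abs{L^*TLx - L^*TLy} = \Abs{L^*(TLx - TLy)} \le \Abs{TLx - TLy}$, so the right-hand side is at least $\Abs{L^*TLx - L^*TLy}^2$. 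This is precisely $1$-cocoercivity of $L^*TL$, hence firm nonexpansiveness.

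I do not expect a real obstacle here; this is a short corollary. The only points needing a little care are that the computation must be carried out with the \emph{real part} of the inner product in the complex case (so the cross terms behave as in the real setting, as noted in the preamble to this section), and, if one takes the route through \Cref{mainprop}, that the degenerate case $L = 0$ has to be handled separately — but then $L^*TL$ is a constant map $x \mapsto L^*(T0)$, which satisfies the firm nonexpansiveness inequality trivially.
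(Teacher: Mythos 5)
Your proposal is correct. The paper states this corollary without its own proof, simply citing \cite{BaCo17}; your first route --- applying \cref{mainprop} with a singleton index set, $T_1=T$, $\beta_1=1$, to get $1/\Abs{L}^2$-cocoercivity and hence $1$-cocoercivity since $\Abs{L}\leq 1$ --- is precisely the standard derivation behind that citation, including the correct handling of the degenerate case $L=0$, and your second, direct computation via the adjoint identity and $\Abs{L^*}=\Abs{L}\leq 1$ is an equally valid, self-contained alternative that needs no case distinction.
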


\begin{lemma}\label{lem:PseudoHuberDerivativeAsProx} 
  Let \begin{equation} \label{eq:PseudoHuberDerivativePreProx} \phi_1:\mathbb{R}\to\mathbb{R}\cup\{\infty\}, \quad x\mapsto \begin{cases} -\dfrac{x^2}{2}-\sqrt{1-x^2} & \text{if } \abs{x}\leq1, \\ +\infty & \text{otherwise} \end{cases} \end{equation} Then, $\phi_1 \in \Gamma_0(\mathbb{R})$, $\phi_1$ is even, $\argmin\limits_{x\in\mathbb{R}}\phi_1(x) = 0$ and $ \prox_{\phi_1}(y) = \dfrac{y}{\sqrt{1 + y^2}}. $ 
\end{lemma}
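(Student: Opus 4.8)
The plan is to verify the listed properties in the stated order, obtaining the proximal formula from the standard resolvent identity $\prox_{\phi_1} = (\identityMapping + \partial\phi_1)^{-1}$. Evenness is immediate, since $x \mapsto -x^2/2 - \sqrt{1-x^2}$ depends on $x$ only through $x^2$ and the value $+\infty$ is assigned symmetrically. Properness holds because $\operatorname{dom}\phi_1 = [-1,1] \neq \emptyset$ and $\phi_1 \geq -3/2$ on $[-1,1]$, so the value $-\infty$ is never attained.

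To establish $\phi_1 \in \Gamma_0(\mathbb{R})$ it then remains to check convexity and lower semicontinuity. On $(-1,1)$ the function is smooth, with
\[
\phi_1'(x) = -x + \frac{x}{\sqrt{1-x^2}} = x\!\left(\frac{1}{\sqrt{1-x^2}} - 1\right), \qquad \phi_1''(x) = \frac{1}{(1-x^2)^{3/2}} - 1 \geq 0,
\]
so $\phi_1$ is convex on the open interval. Since $\phi_1$ extends continuously to the compact interval $[-1,1]$ with finite endpoint values $\phi_1(\pm1) = -1/2$ and equals $+\infty$ outside $[-1,1]$, the usual gluing argument promotes this to convexity on all of $\mathbb{R}$, and continuity on the closed effective domain together with the value $+\infty$ elsewhere yields lower semicontinuity at every point (in particular at $\pm1$). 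Hence $\phi_1 \in \Gamma_0(\mathbb{R})$. The argmin claim follows from the sign of $\phi_1'$: it is strictly negative on $(-1,0)$ and strictly positive on $(0,1)$, so $\phi_1$ strictly decreases on $[-1,0]$ and strictly increases on $[0,1]$, making $x = 0$ the unique minimizer.

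For the proximal identity, fix $y \in \mathbb{R}$ and set $x^\star := y/\sqrt{1+y^2} \in (-1,1)$. Since $\phi_1$ is differentiable at $x^\star$, one has $\partial\phi_1(x^\star) = \{\phi_1'(x^\star)\}$, and using $1 - (x^\star)^2 = 1/(1+y^2)$ one computes $\phi_1'(x^\star) = -x^\star + x^\star\sqrt{1+y^2} = y - x^\star$. Therefore $x^\star + \phi_1'(x^\star) = y$, i.e.\ $y \in (\identityMapping + \partial\phi_1)(x^\star)$, and since $\phi_1 \in \Gamma_0(\mathbb{R})$ the resolvent $(\identityMapping + \partial\phi_1)^{-1} = \prox_{\phi_1}$ is single-valued, giving $\prox_{\phi_1}(y) = x^\star = y/\sqrt{1+y^2}$. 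Equivalently, one may argue directly with the strictly convex map $x \mapsto \phi_1(x) + \tfrac12(x-y)^2$, which attains its minimum on the compact set $[-1,1]$; the minimizer lies in the open interior because $\phi_1'(x) \to +\infty$ as $x \to 1^-$ and $\phi_1'(x) \to -\infty$ as $x \to -1^+$, so it is characterized by the first-order condition $\tfrac{x}{\sqrt{1-x^2}} = y$, whose unique solution carrying the sign of $y$ is $x^\star$.

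I do not anticipate a genuine obstacle: the only steps requiring a little care are the gluing at the endpoints $\pm1$ for convexity and lower semicontinuity, and the observation that the proximal minimizer is interior to $[-1,1]$ so that the smooth first-order condition fully determines it. Since everything takes place on $\mathbb{R}$, no Wirtinger-derivative subtleties enter here.
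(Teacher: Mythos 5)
Your proof is correct, and since the paper states \cref{lem:PseudoHuberDerivativeAsProx} without providing a proof, there is nothing to contrast it with: your direct verification (evenness and properness by inspection, convexity via $\phi_1''(x)=(1-x^2)^{-3/2}-1\geq 0$ together with the gluing at $\pm 1$, the argmin from the sign of $\phi_1'$, and the prox formula from the first-order optimality condition $x^\star+\phi_1'(x^\star)=y$ at the interior point $x^\star=y/\sqrt{1+y^2}$) is exactly the standard argument the authors evidently had in mind. All the computations check out, including the key identity $1-(x^\star)^2=1/(1+y^2)$ and the observation that the minimizer must be interior because $\phi_1'(x)\to\pm\infty$ as $x\to\pm 1^{\mp}$.
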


This lemma provides a key connection between the smoothed amplitude nonlinearity used in the loss function and a proximal operator, allowing us to apply the theoretical results and properties of proximal mappings in complex Hilbert spaces.

\begin{lemma}[Norm Composition {\cite[Theorem 6.18]{Be17}}]\label{lem:NormCompProx} 
  Let $\mathcal{H}$ be a real Hilbert space, and let $f : \mathcal{H} \to \mathbb{R}$ be defined by \[ f(x) = g(\|x\|), \] where $g \in \Gamma_0(\mathbb{R})$ and $\operatorname{dom}(g) \subseteq \mathbb{R}_+$. Then, the proximal operator of $f$ is given by \[ \prox_f(x) = \begin{cases} \prox_g(\Abs{x}) \dfrac{x}{\|x\|}, & \text{if } x \neq 0, \\ \{ y \in \mathcal{H} : \|y\| = \prox_g(0) \}, & \text{if } x = 0. \end{cases} \]
\end{lemma}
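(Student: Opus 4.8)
The plan is to evaluate $\prox_f(x) = \argmin_{y \in \mathcal{H}} \bigl\{ g(\|y\|) + \tfrac12 \|y - x\|^2 \bigr\}$ directly, using the radial structure of $f$. The first step is to split the minimization over $y$ into an outer minimization over the radius $r = \|y\| \ge 0$ and an inner minimization over the sphere $\{ y \in \mathcal{H} : \|y\| = r \}$; on this sphere the term $g(\|y\|) = g(r)$ is constant, so the inner problem amounts to minimizing $\|y - x\|^2 = r^2 + \|x\|^2 - 2\Real\scp{y}{x}$, i.e.\ to maximizing $\Real\scp{y}{x}$ subject to $\|y\| = r$.

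The inner problem is then settled by Cauchy--Schwarz: $\Real\scp{y}{x} \le \|y\|\,\|x\| = r\|x\|$. When $x \ne 0$ this bound is attained at the unique point $y = r\,x/\|x\|$; when $x = 0$ both sides vanish and every $y$ of norm $r$ attains it. In either case the optimal inner value equals $g(r) + \tfrac12 (r - \|x\|)^2$, so the outer problem reduces to $\min_{r \ge 0} \bigl\{ g(r) + \tfrac12 (r - \|x\|)^2 \bigr\}$; since $g(r) = +\infty$ for $r < 0$ (because $\operatorname{dom}(g) \subseteq \mathbb{R}_+$) this infimum is exactly $\prox_g(\|x\|)$, and as $r \mapsto g(r) + \tfrac12 (r - \|x\|)^2$ is strongly convex its minimizer $r^\star = \prox_g(\|x\|) \ge 0$ is unique.

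It remains to reassemble the full minimizer. For $x \ne 0$ both the optimal radius and the optimal direction are uniquely determined, giving the single point $\prox_f(x) = r^\star\, x/\|x\| = \prox_g(\|x\|)\, x/\|x\|$. For $x = 0$ the optimal radius is still the unique value $r^\star = \prox_g(0) \ge 0$, but every point of the sphere of that radius is a minimizer, so $\prox_f(0) = \{ y \in \mathcal{H} : \|y\| = \prox_g(0) \}$, which is nonempty (the origin if $\prox_g(0) = 0$, a genuine sphere otherwise).

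I expect the only delicate points to be bookkeeping rather than substance: justifying that the joint minimum is attained and that the order of the two minimizations may be exchanged, which follows from $g \in \Gamma_0(\mathbb{R})$ together with $\operatorname{dom}(g) \subseteq \mathbb{R}_+$ (so $f = g \circ \|\cdot\|$ is proper and lower semicontinuous) and the coercivity of the quadratic penalty; and keeping in mind that $f$ need not be convex, so $\prox_f$ has to be read as the a priori set-valued $\argmin$ -- the computation above incidentally shows it is single-valued exactly when $x \ne 0$. This reproduces the reasoning behind \cite[Theorem 6.18]{Be17}.
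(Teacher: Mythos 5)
Your proof is correct. The paper does not prove this lemma itself --- it imports it verbatim from \cite[Theorem 6.18]{Be17} --- and your two-stage minimization (fix the radius $r=\|y\|$, pick the optimal direction via Cauchy--Schwarz, then reduce the radial problem to $\prox_g(\|x\|)$, which is uniquely attained by strong convexity since $g\in\Gamma_0(\mathbb{R})$ with $\operatorname{dom}(g)\subseteq\mathbb{R}_+$) is essentially the standard argument behind that cited result, including the correct set-valued reading at $x=0$ where $f=g(\|\cdot\|)$ fails to be convex.
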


In the following, we use these results to explicitly represent $\varphi_\delta$ and $\varphi_\delta'$ as proximal operators in the complex setting (\cref{proxstar}), which makes it possible to formulate the unfolded iterations in the form of proximal operators.

\begin{remark}\label{genproxg} 
  Let $g\coloneqq\phi_1+I_{\mathbb{R}_+}$ with $\phi_1$ from \eqref{eq:PseudoHuberDerivativePreProx} and $I_{\mathbb{R}_+}(t)=
  \begin{cases} 
    0&t\in\mathbb{R}_+\\ \infty&t\not\in\mathbb{R}_+ 
  \end{cases}$.
   Then, $g\in\Gamma_0(\mathbb{R})$ and for $t\in\mathbb{R}_+$ 
   \begin{align*} 
    \prox_g(t)&=\prox_{\phi_1+I_{\mathbb{R}_+}}(t)=\argmin_{s\in\mathbb{R}}\brac{\phi_1(s)+I_{\mathbb{R}_+}(s)+\tfrac12(s-t)^2}\\ &=\argmin_{s\in\mathbb{R}_+}\brac{\phi_1(s)+\tfrac12(s-t)^2}=P_{\mathbb{R}_+}\brac{\argmin_{s\in\mathbb{R}}\brac{\phi_1(s)+\tfrac12(s-t)^2}}\\ &=P_{\mathbb{R}_+}\brac{\prox_{\phi_1}(t)}\overset{\textup{\cref{lem:PseudoHuberDerivativeAsProx}}}{=}P_{\mathbb{R}_+}\brac{\frac{t}{\sqrt{1+t^2}}}\overset{t\in\mathbb{R}_+}=\frac{t}{\sqrt{1+t^2}}. 
  \end{align*} 
  Here, $P_{\mathbb{R}_+}$ denotes the Euclidean projection from $\mathbb{R}$ to $\mathbb{R_+}$. From this, we get 
  \[
  \{ y \in \mathbb{R}^n : \Abs{y} = \prox_g(0) \}=\{ y \in \mathbb{R}^n : \Abs{y} = 0 \}=\{0\}.
  \]
  For $x\in\mathbb{R}^n\setminus\{0\}$, we get 
  \[
  \prox_g(\Abs{x})\frac{x}{\Abs{x}}=\frac{\Abs{x}}{\sqrt{1+\Abs{x}^2}}\frac{x}{\Abs{x}}=\frac{x}{\sqrt{1+\Abs{x}^2}}.
  \] 
  Combined with the above, we get from \cref{lem:NormCompProx} for $f : \mathbb{R}^n \to \mathbb{R},\ x\mapsto g(\Abs{x})$ that 
  \[ 
  \prox_f (x) = \begin{cases} \prox_g(\Abs{x}) \frac{x}{\Abs{x}}, & x \neq 0, \\ 0, & x = 0. \end{cases}=\frac{x}{\sqrt{1+\Abs{x}^2}}. 
  \] 
  Moreover, $f\in\Gamma_0(\mathbb{R}^n)$, since $\phi_1\in\Gamma_0(\mathbb{R})$ and thus $g\in\Gamma_0(\mathbb{R})$.
 \end{remark}

\begin{remark}\label{proxstar}
  Let $\varphi_\delta:\mathbb{C}\to\mathbb{R},\ z\mapsto\sqrt{|z|^2+\delta^2}-\delta$ for $\delta>0$ and identify $\mathbb{C}$ with $\mathbb{R}^2$. Then, we have for the Wirtinger derivative of $\varphi_\delta$ that 
  \begin{align*} 
    \varphi_\delta'(z)={}&\frac{z}{\sqrt{|z|^2+\delta^2}}=\frac{\frac{z}{\delta}}{\sqrt{\abs{\frac{z}{\delta}}^2+1}}\overset{\textup{\cref{genproxg}}}=\prox_{f}\brac{\frac{z}{\delta}} \overset{\eqref{eq:ProxScalingProperty}}{=}\frac{1}{\delta}\prox_{\delta^2f(\frac{\cdot}{\delta})}\brac{z}. 
  \end{align*}
  Moreover, we have \[\varphi_\delta(z)\varphi_\delta'(z)=\brac{\sqrt{|z|^2+\delta^2}-\delta}\frac{z}{\sqrt{|z|^2+\delta^2}}=z-\frac{\delta z}{\sqrt{|z|^2+\delta^2}}=z-\delta\varphi_\delta'(z)=z-\prox_{\delta^2f(\frac{\cdot}{\delta})}\brac{z}\] Let \begin{equation}\label{eq:fDeltaDef} f_\delta:\mathbb{C}\to\mathbb{R},\ z\mapsto\delta^2f\brac{\frac{z}{\delta}}. \end{equation} Since $f\in\Gamma_0(\mathbb{C})$, we also have $f_\delta\in\Gamma_0(\mathbb{C})$. Thus, by \cite[Corollary 13.38]{BaCo17}, $f_\delta^*\in\Gamma_0(\mathbb{C})$, where $f_\delta^*$ denotes the convex conjugate (or Fenchel conjugate) of $f_\delta$. By Moreau's decomposition (\cite[Theorem 2.1]{Co18}), we get \begin{equation}\label{eq:MoreauDecomposition} z-\prox_{f_\delta}\brac{z}=\prox_{f_\delta^*}\brac{z}, \end{equation} In total, we have shown \begin{equation} \label{eq:NonlinearitiesAsProx} \varphi_\delta'(z)=\frac{1}{\delta}\prox_{f_\delta}\brac{z}\text{ and }\varphi_\delta(z)\varphi_\delta'(z)=\prox_{f_\delta^*}\brac{z}. \end{equation} Again using \eqref{eq:MoreauDecomposition}, we get for $g\in\mathbb{R}$, \[\varphi_\delta(z)\varphi_\delta'(z)-g\varphi_\delta'(z)=z-\prox_{f_\delta}\brac{z}-\frac{g}{\delta}\prox_{f_\delta}\brac{z}=z-\brac{1+\frac{g}{\delta}}\prox_{f_\delta}\brac{z}.\] Moreover, since $f_\delta\in\Gamma_0(\mathbb{C})$ and $0 \in \displaystyle\argmin_{z \in \mathbb{C}} f_\delta(z)$, by \eqref{cor:ProxZeroAtZero}, we get $\prox_{f_\delta}\brac{0} = 0$. Moreover, using \eqref{lem:ZeroMinOfConjugate}, we can analogously conclude $\prox_{f_\delta^*}\brac{0} = 0$. 
\end{remark}
\begin{remark}
  From \cref{proxstar}, one can obtain by taking the modulus of $\varphi_\delta'$
\begin{equation}\label{supnablaphi}
|\varphi_\delta'(z)| = \frac{|z|}{\sqrt{|z|^2 + \delta^2}} = \frac{\sqrt{|z|^2}}{\sqrt{|z|^2 + \delta^2}} \le \frac{\sqrt{|z|^2 + \delta^2}}{\sqrt{|z|^2 + \delta^2}} = 1.\qedhere
\end{equation}
\end{remark}

\section{Exponential Growth}
In the following, we establish a bound on the network output as well as a pertubation bound for the network parameters. The latter shows that the network output is Lipschitz with respect to the network parameters.
\begin{assumption}\label{assum1}
 Let $\mphi \in \mathcal{U}(\numpixel)$, $\mathbf{A}\in\mathbb{C}^{KN\times N}$ and $\tau_\ell \in \mathbb{R}_{>0}$ with
  \begin{equation}\label{bddest}
  \frac{\tau_\ell}{KN} \Abs{ \mathbf{A} }^2_{2 \to 2} \leq 1.
  \end{equation} 
\end{assumption}
\begin{definition}\label{assum2}%
For $z\in\mathbb{C}^N$, define the operators
  \begin{equation}
    \label{eq:DefTPhiAnsSGPhi}
  T_{\mphi}(z) \coloneqq \frac{1}{KN} (\mathbf{A}\mphi)^* T\left( \mathbf{A}\mphi z \right)\quad \text{and}\quad
  S^{\mathbf{g}}_{\mphi}(z) \coloneqq \frac{1}{KN} (\mathbf{A}\mphi)^* S^{\mathbf{g}}\left( \mathbf{A}\mphi z \right),
  \end{equation}
  where
  \[
    T(z) \coloneqq \prox_{f_\delta^*}(z)\quad \text{and}\quad S^{\mathbf{g}}(z) \coloneqq \frac{\mathbf{g}}{\delta}\odot \prox_{f_\delta}(z).
  \]
\end{definition}
\begin{remark}\label{tandsvanish}
  By \cref{assum2}, we have for $z \in \mathbb{C}^{\numpixel}$ that
  \[
  T_{\mphi}(z) = \frac{1}{KN} (\mathbf{A}\mphi)^* \prox_{f_\delta^*}\left( \mathbf{A}\mphi z \right)\quad \text{and}\quad
  S^{\mathbf{g}}_{\mphi}(z) = \frac{1}{KN} (\mathbf{A}\mphi)^* \frac{\mathbf{g}}{\delta}\odot \prox_{f_\delta}\left( \mathbf{A}\mphi z \right),
  \]
Moreover, since $\prox_{f_\delta}$ and $\prox_{f_\delta^*}$ vanish at zero (cf. \cref{proxstar}), we get
\[
S^{\mathbf{g}}_{\mphi}(0)=\prox_{f_\delta}\brac{0} = 0 = \prox_{f_\delta^*}\brac{0} = T_{\mphi}(0).
\] 
\end{remark}
\begin{remark}\label{variousineq}
The following inequalities will be used later.
\begin{itemize}
    \item Let $\mathbf{A} \in \mathbb{C}^{m \times n}$ and $\mathbf{z} \in \mathbb{C}^{n}$. Then,
     \begin{equation}
        \Abs{\mathbf{Az}}_2 \leq \Abs{\mathbf{A}}_{2 \to 2} \Abs{\mathbf{z}}_2,
        \label{eq:first}
    \end{equation}
    which follows directly from the definition of the operator norm.

    \vspace{0.75em}

    \item Let $\mathbf{x}, \mathbf{y} \in \mathbb{C}^{m \times n}$. Then,
    \begin{equation}
        \Abs{\mathbf{x} \odot \mathbf{y}}_F^2 = \sum_{i=1}^{m} \sum_{j=1}^{n} |x_{ij} y_{ij}|^2 
        \leq \max_{i,j} |x_{ij}|^2 \sum_{i=1}^{m} \sum_{j=1}^{n} |y_{ij}|^2 
        = \Abs{\mathbf{x}}_\infty^2 \Abs{\mathbf{y}}_F^2,
        \label{eq:third}
    \end{equation}
    where we used the entrywise inequality $|x_{ij} y_{ij}|^2 \leq \left( \max_{i,j} |x_{ij}|^2 \right) |y_{ij}|^2$.
\end{itemize}

\vspace{0.75em}

In particular, for vectors $\mathbf{x}, \mathbf{y} \in \mathbb{C}^{n}$, we have
\begin{equation}
    \Abs{\mathbf{x} \odot \mathbf{y}}_2^2 \leq \Abs{\mathbf{x}}_\infty^2 \Abs{\mathbf{y}}_2^2.
    \label{eq:second}
\end{equation}
\end{remark}

\begin{lemma}\label{TandS}
  If \cref{assum1} holds, $\tau_\ell T_\mphi$ is firmly nonexpansive and $\tau_\ell S^{\mathbf{g}}_\mphi$ is $\frac{\Abs{\mathbf{g}}_\infty}{\delta}$-Lipschitz continuous.
  \end{lemma}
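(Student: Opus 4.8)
\emph{Proof plan.} The plan is to handle the two claims separately, in both cases reducing to \cref{prop1} together with the operator-norm identity $\Abs{\mathbf{A}\mphi}_{2\to2} = \Abs{\mathbf{A}}_{2\to2}$ (valid since $\mphi$ is unitary) and \cref{assum1}.

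For the first claim I would first note that $f_\delta^* \in \Gamma_0(\mathbb{C})$ by \cref{proxstar}, so that its coordinate-wise application $T = \prox_{f_\delta^*}$ on $\mathbb{C}^{KN}$ is the proximal operator of the separable sum $\mathbf{z} \mapsto \sum_i f_\delta^*(z_i) \in \Gamma_0(\mathbb{C}^{KN})$ and is therefore firmly nonexpansive, i.e.\ $1$-cocoercive (\cref{prop1}). Applying \cref{mainprop} with a single index, $L_1 = \mathbf{A}\mphi \in \mathcal{B}(\mathbb{C}^N,\mathbb{C}^{KN})$ and $\beta_1 = 1$, shows that $z \mapsto (\mathbf{A}\mphi)^* T(\mathbf{A}\mphi z)$ is $\Abs{\mathbf{A}\mphi}^{-2}_{2\to2} = \Abs{\mathbf{A}}^{-2}_{2\to2}$-cocoercive; scaling by $\tfrac{\tau_\ell}{KN}$ (scaling rule in \cref{prop1}) then makes $\tau_\ell T_\mphi$ a $\tfrac{KN}{\tau_\ell\Abs{\mathbf{A}}^2_{2\to2}}$-cocoercive operator. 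By \eqref{bddest} this constant is $\geq 1$, and any $\beta$-cocoercive operator with $\beta \geq 1$ is a fortiori $1$-cocoercive, hence firmly nonexpansive (\cref{prop1}). An equivalent one-line alternative avoiding \cref{mainprop}: $\Real\scp{z - z'}{\tau_\ell T_\mphi z - \tau_\ell T_\mphi z'} = \tfrac{\tau_\ell}{KN}\Real\scp{\mathbf{A}\mphi(z - z')}{T(\mathbf{A}\mphi z) - T(\mathbf{A}\mphi z')} \geq \tfrac{\tau_\ell}{KN}\Abs{T(\mathbf{A}\mphi z) - T(\mathbf{A}\mphi z')}^2 \geq \Abs{\tau_\ell T_\mphi z - \tau_\ell T_\mphi z'}^2$, where the final step combines $\Abs{(\mathbf{A}\mphi)^* v}_2 \leq \Abs{\mathbf{A}}_{2\to2}\Abs{v}_2$ with \eqref{bddest}.

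For the second claim I would estimate directly. For $z,z'\in\mathbb{C}^N$, writing $w=\mathbf{A}\mphi z$ and $w'=\mathbf{A}\mphi z'$,
\[
\Abs{\tau_\ell S^{\mathbf{g}}_\mphi z - \tau_\ell S^{\mathbf{g}}_\mphi z'}_2 = \tfrac{\tau_\ell}{KN}\Abs{(\mathbf{A}\mphi)^*\Brac{\tfrac{\mathbf{g}}{\delta}\odot\brac{\prox_{f_\delta}(w)-\prox_{f_\delta}(w')}}}_2,
\]
and then bound the right-hand side in four steps: (i) $\Abs{(\mathbf{A}\mphi)^* v}_2 \leq \Abs{\mathbf{A}}_{2\to2}\Abs{v}_2$ (unitarity of $\mphi$); (ii) the Hadamard bound \eqref{eq:second}, $\Abs{\tfrac{\mathbf{g}}{\delta}\odot u}_2 \leq \tfrac{\Abs{\mathbf{g}}_\infty}{\delta}\Abs{u}_2$; (iii) nonexpansiveness of $\prox_{f_\delta}$ (it is firmly nonexpansive by \cref{prop1}, as $f_\delta\in\Gamma_0(\mathbb{C})$ by \cref{proxstar}), giving $\Abs{\prox_{f_\delta}(w)-\prox_{f_\delta}(w')}_2 \leq \Abs{w-w'}_2$; (iv) $\Abs{w-w'}_2 = \Abs{\mathbf{A}\mphi(z-z')}_2 \leq \Abs{\mathbf{A}}_{2\to2}\Abs{z-z'}_2$. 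Concatenating these produces the prefactor $\tfrac{\tau_\ell}{KN}\Abs{\mathbf{A}}^2_{2\to2}\cdot\tfrac{\Abs{\mathbf{g}}_\infty}{\delta}$, and \eqref{bddest} reduces $\tfrac{\tau_\ell}{KN}\Abs{\mathbf{A}}^2_{2\to2}\leq 1$, leaving $\tfrac{\Abs{\mathbf{g}}_\infty}{\delta}$ as the Lipschitz constant.

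I do not expect a genuine obstacle; the argument is essentially bookkeeping. The two points to be careful about are (a) that the coordinate-wise nonlinearities $T = \prox_{f_\delta^*}$ and $\prox_{f_\delta}$ really are proximal operators of $\Gamma_0(\mathbb{C}^{KN})$ functions (so that firm nonexpansiveness and nonexpansiveness genuinely apply), which is exactly what \cref{proxstar} together with the separable-sum structure supplies, and (b) tracking the exponents of $\tfrac{\tau_\ell}{KN}$ and $\Abs{\mathbf{A}}_{2\to2}$ so that \cref{assum1} can be invoked in the form $\tfrac{KN}{\tau_\ell\Abs{\mathbf{A}}^2_{2\to2}}\geq 1$ for the first claim and $\tfrac{\tau_\ell}{KN}\Abs{\mathbf{A}}^2_{2\to2}\leq 1$ for the second; this is the only place where \eqref{bddest} enters.
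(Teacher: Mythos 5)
Your proposal is correct and follows essentially the same route as the paper: the first claim via \cref{mainprop} applied to $\prox_{f_\delta^*}$ (1-cocoercive) composed with $L=\mathbf{A}\mphi$, the scaling rule from \cref{prop1}, and \eqref{bddest} to push the cocoercivity constant to at least $1$; the second via the same four-step chain of operator-norm, Hadamard, and nonexpansiveness bounds. Your added remarks — the separable-sum justification that the coordinate-wise nonlinearity is itself a proximal operator, and the one-line direct cocoercivity computation — are correct refinements of details the paper leaves implicit, but do not change the argument.
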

  \begin{proof}
  Recalling \eqref{eq:DefTPhiAnsSGPhi}, we have
  \[
  KN T_\mphi(z) = (\mathbf{A}\mphi)^* \prox_{f_\delta^*}( \mathbf{A}\mphi z ) =  L^* \prox_{f_\delta^*}(Lz),
  \]
  where $L = \mathbf{A}\mphi$. By applying \cref{mainprop} and noting that $\prox_{f_\delta^*}$ is 1-cocoercive and $\Abs{\mathbf{A}\mphi}_{2 \to 2}=\Abs{\mathbf{A}}_{2 \to 2}$, we obtain that
  $
  KN T_\mphi \text{ is } \frac{1}{\Abs{\mathbf{A}}_{2 \to 2}^2}\text{-cocoercive}.
  $
  Thus, by using \cref{lem:CocoerciveAndScaling} with $\alpha = \frac{1}{K\numpixel}$%
  , we obtain that
  $
  T_\mphi \text{ is } \frac{KN}{\Abs{\mathbf{A}}_{2 \to 2}^2}\text{-cocoercive}$. Hence $\tau_\ell T_\mphi \text{ is } \frac{KN}{\tau_{\ell}\Abs{\mathbf{A}}_{2 \to 2}^2}\text{-cocoercive}.
  $
  If \cref{assum1} holds, we get $\frac{KN}{\tau_{\ell}\Abs{\mathbf{A}}_{2 \to 2}^2}\geq1$. Thus, $\tau_\ell T_\mphi$  is  $1$-cocoercive, and thus firmly nonexpansive. 
  
  For the Lipschitz continuity of $\tau_\ell S^{\mathbf{g}}_\mphi$, let $z_1,z_2\in\mathbb{C}^N$. Then,
  \[
  \Abs{ S^{\mathbf{g}}_\mphi(z_1) - S^{\mathbf{g}}_\mphi(z_2) }_2 = \Abs{ \frac{1}{KN}(\mathbf{A}\mphi)^* \left( \frac{\mathbf{g}}{\delta} \odot \left( \prox_{f_\delta}(\mathbf{A}\mphi z_1) - \prox_{f_\delta}(\mathbf{A}\mphi z_2) \right) \right) }_2.
  \]
  Since $\prox_{f_\delta}$ is 1-Lipschitz and using \eqref{eq:first} and \eqref{eq:second}, we get
  \begin{align*}
  \Abs{ S^{\mathbf{g}}_\mphi(z_1) - S^{\mathbf{g}}_\mphi(z_2) }_2 &\leq \frac{1}{KN} \Abs{\mathbf{A}\mphi^*}_{2 \to 2}\Abs{ \frac{\mathbf{g}}{\delta} \odot \left( \prox_{f_\delta}(\mathbf{A}\mphi z_1) - \prox_{f_\delta}(\mathbf{A}\mphi z_2) \right) }_2 \\
  &\leq \frac{1}{KN} \Abs{\mathbf{A}}_{2 \to 2} \Abs{ \frac{\mathbf{g}}{\delta} }_\infty \Abs{ \prox_{f_\delta}(\mathbf{A}\mphi z_1) - \prox_{f_\delta}(\mathbf{A}\mphi z_2) }_2 \\
  &\leq \frac{1}{KN} \Abs{\mathbf{A}}_{2 \to 2} \Abs{ \frac{\mathbf{g}}{\delta} }_\infty \Abs{ \mathbf{A}\mphi z_1 - \mathbf{A}\mphi z_2 }_2 \\
  &\leq \frac{1}{KN} \Abs{\mathbf{A}}_{2 \to 2} \Abs{ \frac{\mathbf{g}}{\delta} }_\infty \Abs{\mathbf{A}\mphi}_{2 \to 2} \Abs{ z_1 -  z_2 }_2 \\
  &= \frac{1}{KN} \Abs{\mathbf{A}}^2_{2 \to 2} \Abs{ \frac{\mathbf{g}}{\delta} }_\infty \Abs{ z_1 -  z_2 }_2.
  \end{align*}
  Using \cref{assum1}, we get that
  $
  \tau_\ell S^{\mathbf{g}}_\mphi \text{ is } \frac{\Abs{\mathbf{g}}_\infty}{\delta}\text{-Lipschitz}.
  $
  \end{proof}
Before stating a corollary from the above, we extend the operators $T_\mphi$ and $S_\mphi^\mathbf{G}$ 
from $\mathbb{C}^N$ to $\mathbb{C}^{N \times m}$ by applying them column-wise. 
For $\mathbf{Z} = (\mathbf{z}_1 \mid \cdots \mid \mathbf{z}_m) \in \mathbb{C}^{N \times m}$, we set
\[
\tau_\ell T_\mphi(\mathbf{Z}) \coloneqq \big( \tau_\ell T_\mphi(\mathbf{z}_1) \mid \cdots \mid \tau_\ell T_\mphi(\mathbf{z}_m) \big),
\]
and similarly, for $\mathbf{G} = (\mathbf{g}_1 \mid \cdots \mid \mathbf{g}_m) \in \mathbb{C}^{KN \times m}$,
\[
\tau_\ell S_\mphi^\mathbf{G}(\mathbf{Z}) \coloneqq \big( \tau_\ell S_\mphi^{\mathbf{g}_1}(\mathbf{z}_1) \mid \cdots \mid \tau_\ell S_\mphi^{\mathbf{g}_m}(\mathbf{z}_m) \big).
\]
These extensions are consistent with the Frobenius norm since
\[
\Abs{\mathbf{Z}}_F^2 = \sum_{j=1}^m \Abs{\mathbf{z}_j}_2^2.
\]
Moreover, since $T_\mphi(0) = 0$ and $S_\mphi^{\mathbf{g}}(0) = 0$ by \cref{tandsvanish}, 
it follows that their column-wise extensions also vanish at $0 \in \mathbb{C}^{N \times m}$.
\begin{corollary}\label{SandTfnlc}
  Let $\mathbf{G} = \begin{pmatrix} \mathbf{g}_1 \mid \cdots \mid \mathbf{g}_m \end{pmatrix} \in \mathbb{C}^{KN \times m}$ and $\mathbf{Z} = \begin{pmatrix} \mathbf{z}_1 \mid \cdots \mid \mathbf{z}_m \end{pmatrix} \in \mathbb{C}^{N \times m}$. Under \cref{assum1}, the following hold:
  \begin{itemize}
    \item The operator $\tau_\ell T_\mphi : \mathbb{C}^{N \times m} \to \mathbb{C}^{N \times m}$ is firmly nonexpansive with respect to the Frobenius norm. Moreover, we have
    \[
    \Abs{(\identityMapping -\tau_\ell T_\mphi)(\mathbf{Z})}_F \leq \Abs{\mathbf{Z}}_F.
    \]

    \item The operator $\tau_\ell S^\mathbf{G}_\mphi : \mathbb{C}^{N \times m} \to \mathbb{C}^{N \times m}$ is $\Abs{\mathbf{G}}_\infty / \delta$-Lipschitz continuous with respect to the Frobenius norm. Moreover, we have
    \[
    \Abs{\tau_\ell S^\mathbf{G}_\mphi(\mathbf{Z})}_F \leq \frac{\Abs{\mathbf{G}}_\infty}{\delta} \Abs{\mathbf{Z}}_F.
    \]
  \end{itemize}
\end{corollary}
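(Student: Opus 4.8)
The plan is to reduce both assertions to the single-column statements already proved in \cref{TandS}, exploiting the fact that the Frobenius norm tensorizes over columns, i.e.\ $\Abs{\mathbf{Z}}_F^2 = \sum_{j=1}^m \Abs{\mathbf{z}_j}_2^2$. So the whole argument is a column-wise "summation" argument plus a specialization to the zero matrix.

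For $\tau_\ell T_\mphi$, given $\mathbf{Z} = (\mathbf{z}_1\mid\cdots\mid\mathbf{z}_m)$ and $\mathbf{Z}' = (\mathbf{z}'_1\mid\cdots\mid\mathbf{z}'_m)$, I would expand $\Abs{\tau_\ell T_\mphi(\mathbf{Z}) - \tau_\ell T_\mphi(\mathbf{Z}')}_F^2 + \Abs{(\identityMapping - \tau_\ell T_\mphi)(\mathbf{Z}) - (\identityMapping - \tau_\ell T_\mphi)(\mathbf{Z}')}_F^2$ as a sum over $j$ of the corresponding single-column quantities, apply the firm-nonexpansiveness inequality from \cref{TandS} term by term, and observe that the bounds sum to $\sum_j \Abs{\mathbf{z}_j - \mathbf{z}'_j}_2^2 = \Abs{\mathbf{Z} - \mathbf{Z}'}_F^2$; this is exactly firm nonexpansiveness of the column-wise extension. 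For the inequality $\Abs{(\identityMapping - \tau_\ell T_\mphi)(\mathbf{Z})}_F \leq \Abs{\mathbf{Z}}_F$, I would take $\mathbf{Z}' = 0$: since the column-wise extension of $\tau_\ell T_\mphi$ vanishes at $0 \in \mathbb{C}^{N\times m}$ (as noted after \cref{TandS} via \cref{tandsvanish}), so does $\identityMapping - \tau_\ell T_\mphi$, and the firm-nonexpansiveness inequality specializes to $\Abs{\tau_\ell T_\mphi(\mathbf{Z})}_F^2 + \Abs{(\identityMapping - \tau_\ell T_\mphi)(\mathbf{Z})}_F^2 \leq \Abs{\mathbf{Z}}_F^2$, so in particular each summand is $\leq \Abs{\mathbf{Z}}_F^2$ and taking square roots finishes it.

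For $\tau_\ell S^\mathbf{G}_\mphi$ the only additional observation is that the per-column Lipschitz constants $\Abs{\mathbf{g}_j}_\infty/\delta$ supplied by \cref{TandS} are all dominated by the single constant $\Abs{\mathbf{G}}_\infty/\delta$, because $\Abs{\mathbf{G}}_\infty$ is the entrywise maximum over the whole matrix, hence $\Abs{\mathbf{g}_j}_\infty \leq \Abs{\mathbf{G}}_\infty$ for every $j$ (the same convention as in \eqref{eq:third}). Then $\Abs{\tau_\ell S^\mathbf{G}_\mphi(\mathbf{Z}) - \tau_\ell S^\mathbf{G}_\mphi(\mathbf{Z}')}_F^2 = \sum_j \Abs{\tau_\ell S^{\mathbf{g}_j}_\mphi(\mathbf{z}_j) - \tau_\ell S^{\mathbf{g}_j}_\mphi(\mathbf{z}'_j)}_2^2 \leq (\Abs{\mathbf{G}}_\infty/\delta)^2 \sum_j \Abs{\mathbf{z}_j - \mathbf{z}'_j}_2^2 = (\Abs{\mathbf{G}}_\infty/\delta)^2 \Abs{\mathbf{Z} - \mathbf{Z}'}_F^2$, giving the claimed Lipschitz estimate, and setting $\mathbf{Z}' = 0$ together with $S^\mathbf{G}_\mphi(0) = 0$ gives $\Abs{\tau_\ell S^\mathbf{G}_\mphi(\mathbf{Z})}_F \leq (\Abs{\mathbf{G}}_\infty/\delta)\Abs{\mathbf{Z}}_F$. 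I do not expect any real obstacle here: the result is a routine "tensorization" of \cref{TandS}, and the Frobenius norm is set up precisely so that column-wise estimates add. The only point requiring minor care is the bookkeeping with the matrix $\infty$-norm — confirming that a single Lipschitz constant valid simultaneously for all columns is available — after which everything is summation and monotonicity of $\sqrt{\cdot}$.
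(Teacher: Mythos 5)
Your proposal is correct and follows essentially the same route as the paper: reduce to the single-column statements of \cref{TandS} via the column-wise tensorization of the Frobenius norm, dominate the per-column constants $\Abs{\mathbf{g}_j}_\infty/\delta$ by $\Abs{\mathbf{G}}_\infty/\delta$, and specialize at $\mathbf{Z}'=0$ using \cref{tandsvanish}. The only cosmetic difference is that for $\Abs{(\identityMapping-\tau_\ell T_\mphi)(\mathbf{Z})}_F\leq\Abs{\mathbf{Z}}_F$ you drop a nonnegative term from the firm-nonexpansiveness inequality at $\mathbf{Z}'=0$, while the paper invokes the column-wise nonexpansiveness of $\identityMapping-\tau_\ell T_\mphi$ directly; these are equivalent.
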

\begin{proof}
  From \cref{TandS}, the operator $\tau_\ell T_\mphi : \mathbb{C}^N \to \mathbb{C}^N$ is firmly nonexpansive. 
  By construction, the extension to $\mathbb{C}^{N \times m}$ is column-wise and thus respects the Frobenius norm. 
  Explicitly, for $\mathbf{Z} = (\mathbf{z}_1 \mid \cdots \mid \mathbf{z}_m)$, we obtain
  \[
  \Abs{(\identityMapping - \tau_\ell T_\mphi)(\mathbf{Z})}_F^2 
  = \sum_{j=1}^m \Abs{(\identityMapping - \tau_\ell T_\mphi)(\mathbf{z}_j)}_2^2 
  \leq \sum_{j=1}^m \Abs{\mathbf{z}_j}_2^2 
  = \Abs{\mathbf{Z}}_F^2,
  \]
  which shows the Frobenius nonexpansiveness directly.

  For the second part, \cref{TandS} states that for any $z_1, z_2 \in \mathbb{C}^N$ and corresponding $\mathbf{g} \in \mathbb{C}^{KN}$,
  \[
  \Abs{\tau_\ell S^{\mathbf{g}}_\mphi(z_1) - \tau_\ell S^{\mathbf{g}}_\mphi(z_2)}_2 \leq \frac{\Abs{\mathbf{g}}_\infty}{\delta} \Abs{z_1 - z_2}_2.
  \]
  Applying this column-wise to $\mathbf{Z}_1 = \begin{pmatrix} \mathbf{z}_{1,1} \mid \cdots \mid \mathbf{z}_{1,m} \end{pmatrix}$ and $\mathbf{Z}_2 = \begin{pmatrix} \mathbf{z}_{2,1} \mid \cdots \mid \mathbf{z}_{2,m} \end{pmatrix}$, we get
  \begin{align*}
    \Abs{\tau_\ell S^\mathbf{G}_\mphi(\mathbf{Z}_1) - \tau_\ell S^\mathbf{G}_\mphi(\mathbf{Z}_2)}_F^2
    &= \sum_{j=1}^m \Abs{\tau_\ell S^{\mathbf{g}_j}_\mphi(\mathbf{z}_{1,j}) - \tau_\ell S^{\mathbf{g}_j}_\mphi(\mathbf{z}_{2,j})}_2^2 
    \leq \sum_{j=1}^m \left( \frac{\Abs{\mathbf{g}_j}_\infty}{\delta} \right)^2 \Abs{\mathbf{z}_{1,j} - \mathbf{z}_{2,j}}_2^2 \\
    &\leq \left( \frac{\Abs{\mathbf{G}}_\infty}{\delta} \right)^2 \sum_{j=1}^m \Abs{\mathbf{z}_{1,j} - \mathbf{z}_{2,j}}_2^2 
    = \left( \frac{\Abs{\mathbf{G}}_\infty}{\delta} \right)^2 \Abs{\mathbf{Z}_1 - \mathbf{Z}_2}_F^2.
  \end{align*}
  By \cref{tandsvanish}, we have $S^{\mathbf{g}}_\mphi(0) = 0$ for each $\mathbf{g}$, which implies
  \[
  S^\mathbf{G}_\mphi(0) = \begin{pmatrix} S^{\mathbf{g}_1}_\mphi(0) \mid \cdots \mid S^{\mathbf{g}_m}_\mphi(0) \end{pmatrix} = 0.
  \]
  Taking $\mathbf{Z}_1 = \mathbf{Z}$ and $\mathbf{Z}_2 = 0$ in the Lipschitz inequality, we obtain:
  \[
  \Abs{\tau_\ell S^\mathbf{G}_\mphi(\mathbf{Z})}_F = \Abs{\tau_\ell S^\mathbf{G}_\mphi(\mathbf{Z}) - \tau_\ell S^\mathbf{G}_\mphi(0)}_F \leq \frac{\Abs{\mathbf{G}}_\infty}{\delta} \Abs{\mathbf{Z}}_F. \qedhere
  \]
\end{proof}
\begin{corollary}\label{3rdtermbound}
  Under \cref{assum1}, for any $z_1,z_2 \in \mathbb{C}^\numpixel$, we have
  \begin{equation}\label{expr0}
    \Abs{(z_1 - z_2) - \tau_\ell \left[\left( T_\mphi - S^{\mathbf{g}}_\mphi \right)(z_1) - \left( T_\mphi - S^{\mathbf{g}}_\mphi \right)(z_2)\right]}_2
    \leq \left(1 + \frac{\Abs{\mathbf{g}}_\infty}{\delta} \right) \Abs{z_1 - z_2}_2.
  \end{equation}
  \end{corollary}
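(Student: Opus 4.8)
The plan is to split the left-hand side via the triangle inequality into a ``$T_\mphi$-part'' and an ``$S^{\mathbf{g}}_\mphi$-part'' and bound each separately using \cref{TandS}. Concretely, I would write
\[
\Abs{(z_1 - z_2) - \tau_\ell \left[\left( T_\mphi - S^{\mathbf{g}}_\mphi \right)(z_1) - \left( T_\mphi - S^{\mathbf{g}}_\mphi \right)(z_2)\right]}_2
\leq \Abs{(\identityMapping - \tau_\ell T_\mphi)(z_1) - (\identityMapping - \tau_\ell T_\mphi)(z_2)}_2 + \Abs{\tau_\ell S^{\mathbf{g}}_\mphi(z_1) - \tau_\ell S^{\mathbf{g}}_\mphi(z_2)}_2.
\]

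For the first summand, I would invoke \cref{TandS}, which gives that $\tau_\ell T_\mphi$ is firmly nonexpansive. By \cref{defFNE}, firm nonexpansiveness of $\tau_\ell T_\mphi$ yields
\[
\Abs{(\identityMapping - \tau_\ell T_\mphi)(z_1) - (\identityMapping - \tau_\ell T_\mphi)(z_2)}_2^2 \leq \Abs{z_1 - z_2}_2^2 - \Abs{\tau_\ell T_\mphi(z_1) - \tau_\ell T_\mphi(z_2)}_2^2 \leq \Abs{z_1 - z_2}_2^2,
\]
so the displacement operator $\identityMapping - \tau_\ell T_\mphi$ is $1$-Lipschitz; hence the first summand is bounded by $\Abs{z_1 - z_2}_2$. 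For the second summand, \cref{TandS} states that $\tau_\ell S^{\mathbf{g}}_\mphi$ is $\tfrac{\Abs{\mathbf{g}}_\infty}{\delta}$-Lipschitz continuous, so that summand is bounded by $\tfrac{\Abs{\mathbf{g}}_\infty}{\delta}\Abs{z_1 - z_2}_2$. Adding the two bounds gives the claimed factor $\bigl(1 + \tfrac{\Abs{\mathbf{g}}_\infty}{\delta}\bigr)$.

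There is essentially no obstacle here: the only mildly non-obvious point is recognizing that firm nonexpansiveness of $\tau_\ell T_\mphi$ passes to nonexpansiveness of the reflected/displacement map $\identityMapping - \tau_\ell T_\mphi$, which is immediate from the defining inequality in \cref{defFNE} by dropping the nonnegative term $\Abs{\tau_\ell T_\mphi(z_1) - \tau_\ell T_\mphi(z_2)}_2^2$. Everything else is a direct application of \cref{TandS} together with the triangle inequality, and all constants are already in place, so the argument is short.
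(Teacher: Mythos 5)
Your proposal is correct and follows exactly the paper's argument: the paper's proof is a one-line remark that the claim is an immediate consequence of \cref{TandS} together with the fact that firm nonexpansiveness of $\tau_\ell T_\mphi$ implies nonexpansiveness of $\identityMapping - \tau_\ell T_\mphi$, which is precisely the decomposition and the two bounds you spell out. You have merely made the implicit triangle-inequality split and the dropping of the nonnegative term in \cref{defFNE} explicit.
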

  
  \begin{proof}
    This is an immediate consequence of \cref{TandS} and that the firmly nonexpansiveness of an operator $T$ implies the nonexpansiveness of $\identityMapping-T$.
  \end{proof} 
\begin{remark}\label{def: matrixiteration}
Let $\pmb{\psi}^0 \in \mathbb{C}^N$ with $\lVert\pmb{\psi}^0\rVert_2 = 1$. For $\mphi \in \mathcal{U}(\numpixel)$, $\ell \in \mathbb{N}_0$ and $\mathbf{g} \in \mathbb{C}^{KN}$, we define
\[
f_\mphi^{\ell+1}(\mathbf{g}) :=
\prox_{\tau_\ell \mathcal{R}} \left( 
f_\mphi^{\ell}(\mathbf{g}) 
- \frac{\tau_\ell}{KN} \left(\overline{\mathbf{A}\mphi}\right)^{\!T} 
\left( \varphi_\delta\!\left(\mathbf{A}\mphi f_\mphi^{\ell}(\mathbf{g})\right) 
- \mathbf{g} \right) 
\odot \varphi_\delta'\!\left(\mathbf{A}\mphi f_\mphi^{\ell}(\mathbf{g})\right) 
\right).
\]
with initialization
\[
f_\mphi^0(\mathbf{g}) := \pmb{\psi}^0 \in \mathbb{C}^N, 
\quad \text{with } \lVert\pmb{\psi}^0\rVert_2 = 1.
\]
For a matrix $\mathbf{G} = \begin{pmatrix} \mathbf{g}_1 \mid \cdots \mid \mathbf{g}_m \end{pmatrix} \in \mathbb{C}^{KN \times m}$,  
we extend this definition column-wise, i.e., for $\ell\in\mathbb{N}_0$, we define
\[
f_\mphi^{\ell}(\mathbf{G}) :=
\begin{pmatrix}
f_\mphi^{\ell}(\mathbf{g}_1) \mid \cdots \mid f_\mphi^{\ell+1}(\mathbf{g}_m)
\end{pmatrix}.
\]
This implies,
\[
\mathbf{f} := f_\mphi^0(\mathbf{G}) = 
\begin{pmatrix} \pmb{\psi}^0 \mid \cdots \mid \pmb{\psi}^0 \end{pmatrix} 
\in \mathbb{C}^{N \times m},
\]
i.e., the same initialization $\pmb{\psi}^0$ is used for each column. Moreover,
\[
\lVert\mathbf{f}\rVert_F^2 = 
\left\lVert
\begin{pmatrix} 
\pmb{\psi}^0 \mid \cdots \mid \pmb{\psi}^0 
\end{pmatrix}
\right\rVert_F^2
= \sum_{j=1}^m \lVert \pmb{\psi}^0\rVert_2^2 = m.
\]
\end{remark}

\begin{remark}\label{ineqoutput}
Using \eqref{eq:NonlinearitiesAsProx} element-wise, we get the identity
\begin{equation*}
\left(\varphi_\delta(z) - \mathbf{g}\right) \odot \varphi_\delta'(z) = \varphi_\delta(z)\odot \varphi_\delta'(z) - \mathbf{g}\odot \varphi_\delta'(z)= \prox_{f_\delta^*}\brac{z} - \frac{\mathbf{g}}{\delta} \odot \prox_{f_\delta}(z).
\end{equation*}
Combined with $T_{\mphi}, S^{\mathbf{g}}_{\mphi}$ from \cref{assum2}, the iteration \( f_\mphi^{\ell+1}(\mathbf{g}) \) can be expressed as:
\begin{align}
f_\mphi^{\ell+1}(\mathbf{g}) &= \prox_{\tau_\ell \mathcal{R}} \left( f_\mphi^{\ell}(\mathbf{g}) - \frac{\tau_\ell}{KN} (\mathbf{A}\mphi)^* \left( \varphi_\delta(\mathbf{A}\mphi f_\mphi^{\ell}(\mathbf{g})) - \mathbf{g} \right) \odot \varphi_\delta'(\mathbf{A}\mphi f_\mphi^{\ell}(\mathbf{g})) \right) \notag \\
&=\prox_{\tau_\ell \mathcal{R}} \left( f_\mphi^{\ell}(\mathbf{g}) - \frac{\tau_\ell}{KN} (\mathbf{A}\mphi)^* \left( \prox_{f_\delta^*}(\mathbf{A}\mphi f_\mphi^{\ell}(\mathbf{g})) - \frac{\mathbf{g}}{\delta} \odot \prox_{f_\delta}(\mathbf{A}\mphi f_\mphi^{\ell}(\mathbf{g})) \right)\right) \notag \\
&= \prox_{\tau_\ell \mathcal{R}} \left( f_\mphi^{\ell}(\mathbf{g}) - \frac{\tau_\ell}{KN} (\mathbf{A}\mphi)^* \left( T(\mathbf{A}\mphi f_\mphi^{\ell}(\mathbf{g})) - S^{\mathbf{g}}(\mathbf{A}\mphi f_\mphi^{\ell}(\mathbf{g})) \right) \right)\label{iterationFormLong}\\
&= \prox_{\tau_\ell \mathcal{R}} \left( f_\mphi^{\ell}(\mathbf{g}) - \tau_\ell \left( T_{\mphi}(f_\mphi^{\ell}(\mathbf{g})) - S^{\mathbf{g}}_{\mphi}(f_\mphi^{\ell}(\mathbf{g})) \right) \right) \label{desiredeq}.
\end{align}
\end{remark}
\begin{lemma}\label{defG}
  Let $\mathbf{G} \in \mathbb{R}^{KN \times m}$ and $\mphi \in \mathcal{U}(\numpixel)$. Furthermore, let $\mathcal{R} \in \Gamma_0 (\mathbb{C})$ with $0 \in \displaystyle\argmin_{x \in \mathbb{R}^n} \mathcal{R}(x)$. Then, if \cref{assum1} holds, we have
  \begin{equation}\label{out1}
    \Abs{f_{\mphi}^{\ell}(\mathbf{G})}_F \leq \sqrt{m} + \frac{1}{KN} \Abs{\mathbf{A}}_{2 \to 2}\cdot \Abs{\G}_\infty \displaystyle\sum_{k=0}^{\ell-1}\tau_k \quad\text{  for }\ell \in \mathbb{N}_{0}.
  \end{equation}
\end{lemma}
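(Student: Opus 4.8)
I would establish \eqref{out1} by induction on $\ell\in\mathbb{N}_0$, using the proximal reformulation \eqref{desiredeq} of a single network stage together with the nonexpansiveness/cocoercivity facts in \cref{prop1,SandTfnlc}. For the base case $\ell=0$ the sum in \eqref{out1} is empty, so the claim reduces to $\Abs{f_\mphi^{0}(\mathbf{G})}_F\le\sqrt m$, and this holds with equality: by \cref{def: matrixiteration} we have $f_\mphi^{0}(\mathbf{G})=(\pmb{\psi}^0\mid\dots\mid\pmb{\psi}^0)$ with $\Abs{\pmb{\psi}^0}_2=1$, hence $\Abs{f_\mphi^{0}(\mathbf{G})}_F^2=m$.

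For the induction step, set $\mathbf{Z}\coloneqq f_\mphi^{\ell}(\mathbf{G})$ and use the column-wise form of \eqref{desiredeq},
\[
f_\mphi^{\ell+1}(\mathbf{G})=\prox_{\tau_\ell\mathcal{R}}\!\brac{\mathbf{Z}-\tau_\ell\,\brac{T_\mphi(\mathbf{Z})-S_\mphi^{\mathbf{G}}(\mathbf{Z})}},
\]
where $\prox_{\tau_\ell\mathcal{R}}$ acts column-wise. Since $0\in\argmin_x\mathcal{R}(x)$, also $0\in\argmin_x(\tau_\ell\mathcal{R})(x)$, so $\prox_{\tau_\ell\mathcal{R}}(0)=0$ by \eqref{cor:ProxZeroAtZero}; as $\prox_{\tau_\ell\mathcal{R}}$ is firmly nonexpansive (\cref{prop1}) and hence nonexpansive, its column-wise extension is nonexpansive in the Frobenius norm and fixes the origin, giving $\Abs{\prox_{\tau_\ell\mathcal{R}}(\mathbf{X})}_F\le\Abs{\mathbf{X}}_F$ for all $\mathbf{X}$. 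Applying this and the triangle inequality,
\[
\Abs{f_\mphi^{\ell+1}(\mathbf{G})}_F\le\Abs{(\identityMapping-\tau_\ell T_\mphi)(\mathbf{Z})}_F+\Abs{\tau_\ell S_\mphi^{\mathbf{G}}(\mathbf{Z})}_F ,
\]
and by \cref{SandTfnlc} (which invokes \cref{assum1}) the first term is at most $\Abs{\mathbf{Z}}_F=\Abs{f_\mphi^{\ell}(\mathbf{G})}_F$.

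The delicate point is to bound the second term \emph{independently of $\Abs{\mathbf{Z}}_F$}: the mere Lipschitz estimate of \cref{TandS,SandTfnlc} would yield $\tfrac{\Abs{\mathbf{G}}_\infty}{\delta}\Abs{\mathbf{Z}}_F$, which compounds into a geometric, exponential-in-$\ell$ bound rather than the linear one claimed. The resolution is that $\prox_{f_\delta}$ is bounded: by the explicit formula in \cref{genproxg,proxstar} — equivalently $\abs{\varphi_\delta'}\le1$, cf. \eqref{supnablaphi} — every entry of $\tfrac{\mathbf{g}}{\delta}\odot\prox_{f_\delta}(\mathbf{A}\mphi\mathbf{z})$ is bounded in modulus by the corresponding entry of $\mathbf{g}$, so that column by column
\[
\Abs{\tau_\ell S_\mphi^{\mathbf{g}}(\mathbf{z})}_2=\frac{\tau_\ell}{KN}\Abs{(\mathbf{A}\mphi)^*\!\brac{\tfrac{\mathbf{g}}{\delta}\odot\prox_{f_\delta}(\mathbf{A}\mphi\mathbf{z})}}_2\le\frac{\tau_\ell}{KN}\Abs{\mathbf{A}}_{2\to2}\Abs{\mathbf{g}}_2 ,
\]
using $\Abs{\mathbf{A}\mphi}_{2\to2}=\Abs{\mathbf{A}}_{2\to2}$; crucially this bound is free of $\mathbf{z}$. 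Squaring, summing over the $m$ columns, and passing to the relevant norm of $\mathbf{G}$ then gives $\Abs{\tau_\ell S_\mphi^{\mathbf{G}}(\mathbf{Z})}_F\le\tfrac{\tau_\ell}{KN}\Abs{\mathbf{A}}_{2\to2}\Abs{\mathbf{G}}_\infty$, i.e., exactly the per-stage additive contribution appearing in \eqref{out1}.

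Combining the two estimates yields the one-step recursion $\Abs{f_\mphi^{\ell+1}(\mathbf{G})}_F\le\Abs{f_\mphi^{\ell}(\mathbf{G})}_F+\tfrac{\tau_\ell}{KN}\Abs{\mathbf{A}}_{2\to2}\Abs{\mathbf{G}}_\infty$. Inserting the induction hypothesis for $\ell$ (equivalently, summing this inequality from $0$ to $\ell-1$ starting from $\Abs{f_\mphi^{0}(\mathbf{G})}_F=\sqrt m$) gives precisely \eqref{out1}, closing the induction. I expect the only genuine obstacle to be the iterate-independent estimate of the $S$-term in the third paragraph; the remaining ingredients — the column-wise extension of the preliminary lemmas, the nonexpansiveness and fixed-point-at-zero of the prox step, and the telescoping sum — are routine.
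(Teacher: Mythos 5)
Your proposal follows essentially the same route as the paper's proof: split the stage via \eqref{desiredeq} into $(\identityMapping-\tau_\ell T_\mphi)$ plus $\tau_\ell S^{\mathbf{G}}_\mphi$, absorb the first part by nonexpansiveness and $\prox_{\tau_\ell\mathcal{R}}(0)=0$, and---exactly as the paper does, and as you correctly flag as the crux---bound the $S$-term using the boundedness $\abs{\varphi_\delta'}\le 1$ from \eqref{supnablaphi} rather than its Lipschitz constant, so that the increment is iterate-independent and the recursion telescopes to a linear-in-$\ell$ bound. The one wrinkle is your final conversion: your (correct) column-wise estimate sums to $\tfrac{\tau_\ell}{KN}\Abs{\mathbf{A}}_{2\to2}\Abs{\mathbf{G}}_F$, and since $\Abs{\mathbf{G}}_\infty\le\Abs{\mathbf{G}}_F$ one cannot simply ``pass'' to the $\Abs{\mathbf{G}}_\infty$ appearing in \eqref{out1}; the paper's own proof is equally terse at precisely this spot, so this is a shared bookkeeping issue rather than a defect specific to your argument.
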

\begin{proof}
Let $k\in\mathbb{N}$. Using \eqref{desiredeq}, the nonexpansiveness of proximal mappings, $\prox_{\tau_{k} \mathcal{R}}(0) = 0$ (cf. \eqref{cor:ProxZeroAtZero}, \cref{prop1}), and \cref{SandTfnlc}, we obtain that
\begin{align*}
  \Abs{f_\mphi^{k+1}(\mathbf{G})}_F&\leq \Abs{f_\mphi^{k}(\mathbf{G}) - \tau_k \left( T_{\mphi}(f_\mphi^{k}(\mathbf{G})) - S^{\mathbf{G}}_{\mphi}(f_\mphi^{k}(\mathbf{G})) \right) }_F\\
  &\leq\Abs{  (\identityMapping - \tau_k T_\mphi)(f_\mphi^{k}(\mathbf{G}))}_F + \Abs{\tau_k S^{\mathbf{G}}_{\mphi}(f_\mphi^{k}(\mathbf{G}))}_F\\
  &\leq \left\| f_{\mphi}^{k}(\mathbf{G}) \right\|_F + \Abs{\tau_k S^{\mathbf{G}}_{\mphi}(f_\mphi^{k}(\mathbf{G}))}_F\\
  &= \left\| f_{\mphi}^{k}(\mathbf{G}) \right\|_F + \Abs{\frac{\tau_k}{KN} (\mathbf{A} \mathbf{\Phi})^{*} \big(\mathbf{G} \odot \varphi'_\delta(\mathbf{A} \mathbf{\Phi} f_\mathbf{\Phi}^{k}(\mathbf{G}))\big)}_F
\end{align*}
Since $\varphi'_{\delta}(\mathbf{A} \mathbf{\Phi} f_\mathbf{\Phi}^{k}(\mathbf{G})) \leq L_{\varphi_{\delta}} =\displaystyle\sup_{x \in \mathbb{R}^n} \Abs{\varphi'_\delta(x)}_2 \overset{\eqref{supnablaphi}}{=} 1$, we obtain
\begin{align}\label{bind}\nonumber
  \Abs{f_\mphi^{k+1}(\mathbf{G})}_F &\leq \left\| f_{\mphi}^{k}(\mathbf{G}) \right\|_F + \frac{\tau_k}{KN} \Abs{\mathbf{A}}_{2 \to 2}\cdot L_{\varphi_{\delta}}\Abs{\G}_\infty\\ 
   &= \left\| f_{\mphi}^{k}(\mathbf{G}) \right\|_F + \frac{\tau_k}{KN} \Abs{\mathbf{A}}_{2 \to 2}\cdot\Abs{\G}_\infty 
\end{align}
Note that above we use the boundedness of $\varphi'_{\delta}$ instead of its Lipschitz continuity to avoid getting $c\left\| f_{\mphi}^{k}(\mathbf{G}) \right\|_F$ with $c>1$ as bound for this step, which would lead to a bound for $\Abs{f_{\mphi}^{\ell}(\mathbf{G})}_F$ that grows exponentially with $\ell$.

By starting with $\Abs{f_{\mphi}^{\ell}(\mathbf{G})}_F$ and applying the above inequality $\ell$-times, we get
\begin{equation*}
  \Abs{f_{\mphi}^{\ell}(\mathbf{G})}_F \leq \Abs{f_{\mphi}^{0}(\mathbf{G})}_F + \frac{1}{KN} \Abs{\mathbf{A}}_{2 \to 2}\cdot \Abs{\G}_\infty \displaystyle\sum_{k=0}^{\ell-1}\tau_k.
\end{equation*}
Combined with $\left\| f_{\mphi}^0(\mathbf{G}) \right\|_F = \|\mathbf{f}\|_F = \sqrt{m}$ (cf. \cref{def: matrixiteration}), this shows the statement.
\end{proof}
\begin{lemma}\label{perturb}
  Let $\mathbf{G} = \begin{pmatrix} \mathbf{g}_1 \mid \cdots \mid \mathbf{g}_m \end{pmatrix} \in \mathbb{R}^{KN \times m}$ and $\DictionaryMatOne, \DictionaryMatTwo \in \mathcal{U}(\numpixel)$. If \cref{assum1} holds, then
\begin{align*}
  &\Abs{f_{\DictionaryMatOne}^{\ell +1}(\mathbf{G}) - f_{\DictionaryMatTwo}^{\ell +1}(\mathbf{G})}_F \\
  &\leq \left(1 + \frac{\|\mathbf{G}\|_\infty}{\delta}\right) \left(\frac{2\tau_\ell}{KN} \Abs{\mathbf{A}}_{2 \to 2} \Abs{\mathbf{A}\DictionaryMatTwo - \mathbf{A}\DictionaryMatOne}_{2 \to 2} \Abs{ f_{\DictionaryMatOne}^{\ell}(\mathbf{G}) }_F + \Abs{ f_{\DictionaryMatOne}^{\ell}(\mathbf{G}) - f_{\DictionaryMatTwo}^{\ell}(\mathbf{G}) }_F\right).
\end{align*}
\end{lemma}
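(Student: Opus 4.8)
The plan is to use the nonexpansiveness of the outer proximal step to reduce the claim to an estimate on the pre-proximal arguments, and then to split the difference of those arguments into a part controlled by a single dictionary and a part controlled by the dictionary mismatch $\mathbf{A}\DictionaryMatOne - \mathbf{A}\DictionaryMatTwo$. Writing both iterates column-wise in the form \eqref{desiredeq}, each equals $\prox_{\tau_\ell\mathcal{R}}$ applied to $f_\mphi^{\ell}(\mathbf{G}) - \tau_\ell(T_\mphi - S^{\mathbf{G}}_\mphi)(f_\mphi^{\ell}(\mathbf{G}))$. Since $\prox_{\tau_\ell\mathcal{R}}$ acts column-wise and is firmly nonexpansive (\cref{prop1}), hence nonexpansive with respect to the Frobenius norm,
\[
\Abs{f_{\DictionaryMatOne}^{\ell+1}(\mathbf{G}) - f_{\DictionaryMatTwo}^{\ell+1}(\mathbf{G})}_F \le \Abs{\big[\mathbf{u} - \tau_\ell(T_{\DictionaryMatOne} - S^{\mathbf{G}}_{\DictionaryMatOne})(\mathbf{u})\big] - \big[\mathbf{v} - \tau_\ell(T_{\DictionaryMatTwo} - S^{\mathbf{G}}_{\DictionaryMatTwo})(\mathbf{v})\big]}_F,
\]
with $\mathbf{u} := f_{\DictionaryMatOne}^{\ell}(\mathbf{G})$ and $\mathbf{v} := f_{\DictionaryMatTwo}^{\ell}(\mathbf{G})$. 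Adding and subtracting $\tau_\ell(T_{\DictionaryMatTwo} - S^{\mathbf{G}}_{\DictionaryMatTwo})(\mathbf{u})$ inside the second norm, the quantity inside it equals $(\mathrm{I}) + (\mathrm{II})$, where $(\mathrm{I}) := (\mathbf{u}-\mathbf{v}) - \tau_\ell\big[(T_{\DictionaryMatTwo}-S^{\mathbf{G}}_{\DictionaryMatTwo})(\mathbf{u}) - (T_{\DictionaryMatTwo}-S^{\mathbf{G}}_{\DictionaryMatTwo})(\mathbf{v})\big]$ and $(\mathrm{II}) := \tau_\ell\big[(T_{\DictionaryMatTwo}-S^{\mathbf{G}}_{\DictionaryMatTwo})(\mathbf{u}) - (T_{\DictionaryMatOne}-S^{\mathbf{G}}_{\DictionaryMatOne})(\mathbf{u})\big]$; the point of this decomposition is that $(\mathrm{I})$ only involves the single dictionary $\DictionaryMatTwo$, while $(\mathrm{II})$ is evaluated at the single point $\mathbf{u}$.

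For $(\mathrm{I})$ I would invoke the Frobenius-norm analogue of \cref{3rdtermbound} with dictionary $\DictionaryMatTwo$: by \cref{SandTfnlc}, $\identityMapping - \tau_\ell T_{\DictionaryMatTwo}$ is nonexpansive (being $\identityMapping$ minus a firmly nonexpansive operator) and $\tau_\ell S^{\mathbf{G}}_{\DictionaryMatTwo}$ is $\Abs{\mathbf{G}}_\infty/\delta$-Lipschitz on $\mathbb{C}^{N\times m}$, hence their sum $\identityMapping - \tau_\ell(T_{\DictionaryMatTwo}-S^{\mathbf{G}}_{\DictionaryMatTwo})$ is $(1+\Abs{\mathbf{G}}_\infty/\delta)$-Lipschitz, and $\Abs{(\mathrm{I})}_F \le (1+\Abs{\mathbf{G}}_\infty/\delta)\Abs{\mathbf{u}-\mathbf{v}}_F = (1+\Abs{\mathbf{G}}_\infty/\delta)\Abs{f_{\DictionaryMatOne}^{\ell}(\mathbf{G}) - f_{\DictionaryMatTwo}^{\ell}(\mathbf{G})}_F$.

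For $(\mathrm{II})$, the first move is to recombine $T_\mphi$ and $S^{\mathbf{G}}_\mphi$ into a single operator using the identity from \cref{ineqoutput}: $(T_\mphi - S^{\mathbf{G}}_\mphi)(z) = \tfrac{1}{KN}(\mathbf{A}\mphi)^* P(\mathbf{A}\mphi z)$, where $P(\mathbf{W}) := \prox_{f_\delta^*}(\mathbf{W}) - \tfrac{\mathbf{G}}{\delta}\odot\prox_{f_\delta}(\mathbf{W})$ applied column-wise. Since $\prox_{f_\delta^*}$ and $\prox_{f_\delta}$ are $1$-Lipschitz and vanish at $0$ (\cref{proxstar}), $P$ is $(1+\Abs{\mathbf{G}}_\infty/\delta)$-Lipschitz with $P(0)=0$, so $\Abs{P(\mathbf{W})}_F \le (1+\Abs{\mathbf{G}}_\infty/\delta)\Abs{\mathbf{W}}_F$. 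Then I would telescope
\[
(\mathbf{A}\DictionaryMatTwo)^*P(\mathbf{A}\DictionaryMatTwo\mathbf{u}) - (\mathbf{A}\DictionaryMatOne)^*P(\mathbf{A}\DictionaryMatOne\mathbf{u}) = (\mathbf{A}\DictionaryMatTwo)^*\big[P(\mathbf{A}\DictionaryMatTwo\mathbf{u})-P(\mathbf{A}\DictionaryMatOne\mathbf{u})\big] + \big[(\mathbf{A}\DictionaryMatTwo)^*-(\mathbf{A}\DictionaryMatOne)^*\big]P(\mathbf{A}\DictionaryMatOne\mathbf{u})
\]
and bound each summand using submultiplicativity of $\Abs{\cdot}_{2 \to 2}$, the Lipschitz and growth estimates for $P$, the fact that $\DictionaryMatOne,\DictionaryMatTwo$ are unitary (so $\Abs{\mathbf{A}\DictionaryMatOne}_{2 \to 2} = \Abs{\mathbf{A}\DictionaryMatTwo}_{2 \to 2} = \Abs{\mathbf{A}}_{2 \to 2}$), and $\Abs{(\mathbf{A}\DictionaryMatTwo)^*-(\mathbf{A}\DictionaryMatOne)^*}_{2 \to 2} = \Abs{\mathbf{A}\DictionaryMatTwo - \mathbf{A}\DictionaryMatOne}_{2 \to 2}$. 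After dividing by $KN$ and multiplying by $\tau_\ell$, this gives $\Abs{(\mathrm{II})}_F \le (1+\Abs{\mathbf{G}}_\infty/\delta)\,\tfrac{2\tau_\ell}{KN}\Abs{\mathbf{A}}_{2 \to 2}\Abs{\mathbf{A}\DictionaryMatTwo - \mathbf{A}\DictionaryMatOne}_{2 \to 2}\Abs{f_{\DictionaryMatOne}^{\ell}(\mathbf{G})}_F$. Adding the bounds on $(\mathrm{I})$ and $(\mathrm{II})$ via the triangle inequality and factoring out $(1+\Abs{\mathbf{G}}_\infty/\delta)$ yields exactly the claimed inequality.

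The main obstacle is the bookkeeping in the $(\mathrm{II})$ step: one must recognize that the proximal pieces recombine into the single nonlinearity $P$ with Lipschitz constant exactly $1+\Abs{\mathbf{G}}_\infty/\delta$ (rather than carrying $T_\mphi$ and $S^{\mathbf{G}}_\mphi$ through separately with two different constants), and the telescoping must be arranged so that in \emph{both} summands the operator norm collapses to $\Abs{\mathbf{A}}_{2 \to 2}$ — this is precisely where unitarity of $\DictionaryMatOne$ and $\DictionaryMatTwo$ is used, and it is what produces the factor $2$. A secondary point is that \cref{3rdtermbound} is stated for vectors; the column-wise/Frobenius version used in the $(\mathrm{I})$ step is legitimate because \cref{SandTfnlc} already supplies the matrix analogues of the firm-nonexpansiveness and Lipschitz estimates entering its proof.
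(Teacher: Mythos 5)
Your proposal is correct and follows essentially the same route as the paper's proof: the same three-piece decomposition (your $(\mathrm{I})$ is the paper's third term handled via \cref{3rdtermbound}, and the two summands of your telescoped $(\mathrm{II})$ are the paper's first two cross terms, bounded using the $(1+\Abs{\mathbf{G}}_\infty/\delta)$-Lipschitz continuity and vanishing at zero of the combined nonlinearity together with unitarity of the dictionaries). The only cosmetic difference is that you apply the nonexpansiveness of $\prox_{\tau_\ell\mathcal{R}}$ once up front before splitting, whereas the paper inserts the cross terms inside the proximal arguments and then invokes its $1$-Lipschitzness term by term.
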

\begin{proof}
  Starting from the iteration identity \eqref{iterationFormLong} applied columnwise, we bound the difference:
  \begin{equation*}
    \begin{split}
    \Abs{f_{\DictionaryMatOne}^{\ell+1}(\mathbf{G}) - f_{\DictionaryMatTwo}^{\ell+1}(\mathbf{G})}_F ={}& \Abs{\prox_{\tau_\ell \mathcal{R}} \left( f_{\DictionaryMatOne}^{\ell}(\mathbf{G}) - \frac{\tau_\ell}{KN} (\mathbf{A}\DictionaryMatOne)^*\left( T - S^{\mathbf{G}} \right) (\mathbf{A}\DictionaryMatOne f_{\DictionaryMatOne}^{\ell}(\mathbf{G}))\right) \right.\\
    &\left.- \prox_{\tau_\ell \mathcal{R}} \left( f_{\DictionaryMatTwo}^{\ell}(\mathbf{G}) - \frac{\tau_\ell}{KN} (\mathbf{A}\DictionaryMatTwo)^*\left( T - S^{\mathbf{G}} \right) (\mathbf{A}\DictionaryMatTwo f_{\DictionaryMatTwo}^{\ell}(\mathbf{G}))\right)}_F.
  \end{split} 
\end{equation*} 
Adding cross terms %
and using the triangle inequality, we obtain
\begin{equation*}%
  \begin{split}
    \Abs{f_{\DictionaryMatOne}^{\ell+1}(\mathbf{G}) - f_{\DictionaryMatTwo}^{\ell+1}(\mathbf{G})}_F 
    \leq{}& \Abs{
      \prox_{\tau_\ell \mathcal{R}}\left( f_{\DictionaryMatOne}^\ell(\mathbf{G}) 
      - \frac{\tau_\ell}{KN} (\mathbf{A}\DictionaryMatOne)^*(T - S^{\mathbf{G}})(\mathbf{A}\DictionaryMatOne f_{\DictionaryMatOne}^\ell(\mathbf{G})) \right)
      \right. \\
    &\left.
      - \prox_{\tau_\ell \mathcal{R}}\left( f_{\DictionaryMatOne}^\ell(\mathbf{G}) 
      - \frac{\tau_\ell}{KN} (\mathbf{A}\DictionaryMatOne)^*(T - S^{\mathbf{G}})(\mathbf{A}\DictionaryMatTwo f_{\DictionaryMatOne}^\ell(\mathbf{G})) \right)
    }_F \\
    &+ \Abs{
      \prox_{\tau_\ell \mathcal{R}}\left( f_{\DictionaryMatOne}^\ell(\mathbf{G}) 
      - \frac{\tau_\ell}{KN} (\mathbf{A}\DictionaryMatOne)^*(T - S^{\mathbf{G}})(\mathbf{A}\DictionaryMatTwo f_{\DictionaryMatOne}^\ell(\mathbf{G})) \right)
      \right. \\
    &\left.
      - \prox_{\tau_\ell \mathcal{R}}\left( f_{\DictionaryMatOne}^\ell(\mathbf{G}) 
      - \frac{\tau_\ell}{KN} (\mathbf{A}\DictionaryMatTwo)^*(T - S^{\mathbf{G}})(\mathbf{A}\DictionaryMatTwo f_{\DictionaryMatOne}^\ell(\mathbf{G})) \right)
    }_F \\
    &+ \Abs{
      \prox_{\tau_\ell \mathcal{R}}\left( f_{\DictionaryMatOne}^\ell(\mathbf{G}) 
      - \frac{\tau_\ell}{KN} (\mathbf{A}\DictionaryMatTwo)^*(T - S^{\mathbf{G}})(\mathbf{A}\DictionaryMatTwo f_{\DictionaryMatOne}^\ell(\mathbf{G})) \right)
      \right. \\
    &\left.
      - \prox_{\tau_\ell \mathcal{R}}\left( f_{\DictionaryMatTwo}^\ell(\mathbf{G}) 
      - \frac{\tau_\ell}{KN} (\mathbf{A}\DictionaryMatTwo)^*(T - S^{\mathbf{G}})(\mathbf{A}\DictionaryMatTwo f_{\DictionaryMatTwo}^\ell(\mathbf{G})) \right)
    }_F.
  \end{split}
\end{equation*}
Using the 1-Lipschitz property of the proximal operator, we get
\begin{align*}
  &\Abs{f_{\DictionaryMatOne}^{\ell+1}(\mathbf{G}) - f_{\DictionaryMatTwo}^{\ell+1}(\mathbf{G})}_F \\
  &\leq \underbrace{\Abs{\frac{\tau_\ell}{KN} (\mathbf{A}\DictionaryMatOne)^* \left[\left( T - S^{\mathbf{G}} \right) (\mathbf{A}\DictionaryMatTwo f_{\DictionaryMatOne}^{\ell}(\mathbf{G})) - \left( T - S^{\mathbf{G}} \right) (\mathbf{A}\DictionaryMatOne f_{\DictionaryMatOne}^{\ell}(\mathbf{G}))\right]}_F}_{\eqqcolon(*^1)} \\
  &+ \underbrace{\Abs{\frac{\tau_\ell}{KN} \left((\mathbf{A}\DictionaryMatTwo)^* - (\mathbf{A}\DictionaryMatOne)^*\right) (T-S^{\mathbf{G}}) (\mathbf{A}\DictionaryMatTwo (f_{\DictionaryMatOne}^{\ell}(\mathbf{G})))}_F}_{\eqqcolon(*^2)}\\
  & + \underbrace{\Abs{\left(f_{\DictionaryMatOne}^{\ell}(\mathbf{G}) - f_{\DictionaryMatTwo}^{\ell}(\mathbf{G})\right) - \frac{\tau_\ell}{KN} (\mathbf{A}\DictionaryMatTwo)^* \left[\left( T - S^{\mathbf{G}} \right) (\mathbf{A}\DictionaryMatTwo f_{\DictionaryMatOne}^{\ell}(\mathbf{G})) - \left( T - S^{\mathbf{G}} \right) (\mathbf{A}\DictionaryMatTwo f_{\DictionaryMatTwo}^{\ell}(\mathbf{G}))\right]}_F}_{\eqqcolon(*^3)}.
\end{align*} 
 The third term can be bounded as 
  \begin{align*}
  (*^3)&=\Abs{\left(f_{\DictionaryMatOne}^{\ell}(\mathbf{G}) - f_{\DictionaryMatTwo}^{\ell}(\mathbf{G})\right) - \tau_\ell \left[\left( T_{\DictionaryMatTwo} - S^{\mathbf{G}}_{\DictionaryMatTwo} \right) (f_{\DictionaryMatOne}^{\ell}(\mathbf{G})) - \left( T_{\DictionaryMatTwo} - S^{\mathbf{G}}_{\DictionaryMatTwo} \right) (f_{\DictionaryMatTwo}^{\ell}(\mathbf{G}))\right]}_F\\
    ~^\eqref{expr0}&\leq\left(1 + \frac{\|\mathbf{G}\|_\infty}{\delta} \right)\Abs{(f_{\DictionaryMatOne}^{\ell}(\mathbf{G})) - (f_{\DictionaryMatTwo}^{\ell}(\mathbf{G}))}_F.
  \end{align*}
  Since proximal operator are firmly nonexpansive and thus also 1-Lipschitz, we get that $T - S^{\G}$ is $\left(1 + \frac{\|\mathbf{G}\|_\infty}{\delta}\right)$-Lipschitz and we can bound the first term by:
  \begin{align*}
  (*^1)&\leq \left(1 + \frac{\|\mathbf{G}\|_\infty}{\delta}\right) \cdot \Abs{\frac{\tau_\ell}{KN} (\mathbf{A}\DictionaryMatOne)^*}_{2\to 2} \cdot\Abs{\mathbf{A}\DictionaryMatTwo f_{\DictionaryMatOne}^{\ell}(\mathbf{G}) - \mathbf{A}\DictionaryMatOne f_{\DictionaryMatOne}^{\ell}(\mathbf{G}) }_F\\
  &\leq \left(1 + \frac{\|\mathbf{G}\|_\infty}{\delta}\right) \frac{\tau_\ell}{KN} \Abs{\mathbf{A}\DictionaryMatOne}_{2\to 2} \cdot \Abs{ \mathbf{A}\DictionaryMatTwo  - \mathbf{A}\DictionaryMatOne}_{2\to 2} \Abs{f_{\DictionaryMatOne}^{\ell}(\mathbf{G}) }_F
  \end{align*}
  Using $(T-S^{\mathbf{G}})$ is $\left(1 + \frac{\|\mathbf{G}\|_\infty}{\delta}\right)$-Lipschitz  together with $T(0) = 0 = S^{\mathbf{G}}(0)$ (\cref{tandsvanish}), we obtain
  \begin{align*}
  (*^2)&\leq\Abs{\frac{\tau_\ell}{KN} \left( (\mathbf{A}\DictionaryMatTwo)^* - (\mathbf{A}\DictionaryMatOne)^* \right)}_{2\to 2}\cdot\Abs{(T-S^{\mathbf{G}}) (\mathbf{A}\DictionaryMatTwo (f_{\DictionaryMatOne}^{\ell}(\mathbf{G})))}_F\\
    &\leq \left(1 + \frac{\|\mathbf{G}\|_\infty}{\delta}\right) \Abs{\frac{\tau_\ell}{KN} \left( (\mathbf{A}\DictionaryMatTwo)^* - (\mathbf{A}\DictionaryMatOne)^* \right)}_{2\to 2}\cdot \|\mathbf{A\DictionaryMatTwo}\|_{2 \to 2}\cdot \Abs{ f_{\DictionaryMatOne}^{\ell}(\mathbf{G}) - 0}_F
  \end{align*}
  Noting $\Abs{\mathbf{A}\DictionaryMatOne}_{2\to 2}=\|\mathbf{A}\|_{2 \to 2}=\|\mathbf{A\DictionaryMatTwo}\|_{2 \to 2}$, the bounds for ($*^1$) and ($*^2$) are the same and we get
  \begin{equation*}
  (*^1)+(*^2)+(*^3)\leq\frac{2\tau_\ell}{KN} \Abs{\mathbf{A}}_{2 \to 2} \left(1 + \frac{\Abs{\mathbf{G}}_\infty}{\delta}\right)\Abs{\mathbf{A}\DictionaryMatTwo - \mathbf{A}\DictionaryMatOne}_{2 \to 2}  \Abs{ (f_{\DictionaryMatOne}^{\ell}(\mathbf{G})) }_F
  \end{equation*}
  \[
  + \left(1 + \frac{\|\mathbf{G}\|_\infty}{\delta} \right)\Abs{ (f_{\DictionaryMatOne}^{\ell}(\mathbf{G})) - (f_{\DictionaryMatTwo}^{\ell}(\mathbf{G})) }_F.
  \]
  Factoring out $\left(1 + \frac{\|\mathbf{G}\|_\infty}{\delta}\right)$ shows the desired inequality.
  \end{proof}

\begin{theorem}\label{MainTheorem: Complex}
  Let $\mathbf{A} \in \mathbb{C}^{KN \times \numpixel}$ and $L \in \mathbb{N}_0$. Furthermore, for $\ell \in \{0,\ldots L-1\}$, let $\tau_\ell \in \mathbb{R}_{>0}$ satisfy \cref{assum1}.
  Define the constants 
  \[
    \gamma := 1 + \frac{\Abs{\mathbf{G}}_\infty}{\delta}\quad \text{and}\quad \mathcal{T}_L\coloneqq\displaystyle\sum_{k=0}^{L-1}\tau_k.
  \]
  Then, for all $\DictionaryMatOne, \DictionaryMatTwo \in \mathcal{U}(N)$, the following perturbation bound holds:
  \begin{equation}\label{perturbation}
    \Abs{f_{\DictionaryMatOne}^{L}(\mathbf{G}) - f_{\DictionaryMatTwo}^{L}(\mathbf{G})}_F
    \leq K_L \Abs{\mathbf{A}\DictionaryMatOne - \mathbf{A}\DictionaryMatTwo}_{2 \to 2},
  \end{equation}
  where the constant $K_L \in (0, \infty)$ is given by
  \begin{equation}\label{klbd}
    K_L =\sum_{\ell=0}^{L-1}\gamma^{L-\ell}B_\ell\text{, where }B_\ell\coloneqq\frac{2\tau_\ell}{KN} \Abs{\mathbf{A}}_{2 \to 2} \cdot \brac{\sqrt{m} + \frac{\mathcal{T}_\ell}{KN} \Abs{\mathbf{A}}_{2 \to 2}\cdot \Abs{\G}_\infty}.
  \end{equation}
  
\end{theorem}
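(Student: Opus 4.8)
The plan is to prove \eqref{perturbation} by induction on the number of stages $L$, using \cref{perturb} for the inductive step and \cref{defG} to control the Frobenius norm of the intermediate iterates. Throughout, fix $\DictionaryMatOne,\DictionaryMatTwo\in\mathcal{U}(N)$ and abbreviate
\[
D_\ell\coloneqq\Abs{f_{\DictionaryMatOne}^{\ell}(\mathbf{G})-f_{\DictionaryMatTwo}^{\ell}(\mathbf{G})}_F\quad\text{and}\quad E\coloneqq\Abs{\mathbf{A}\DictionaryMatOne-\mathbf{A}\DictionaryMatTwo}_{2\to2},
\]
so that the goal becomes the estimate $D_L\le K_L E$ with $K_L$ as in \eqref{klbd}. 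For the base case I would observe that the initialization $f_\mphi^0(\mathbf{G})$ from \cref{def: matrixiteration} is the matrix all of whose columns equal $\pmb{\psi}^0$, which does not depend on $\mphi$; hence $D_0=0$, matching $K_0=0$ (the empty sum).

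For the inductive step, I would start from \cref{perturb}, which after replacing the factor $\Abs{f_{\DictionaryMatOne}^{\ell}(\mathbf{G})}_F$ by the bound $\sqrt{m}+\frac{\mathcal{T}_\ell}{KN}\Abs{\mathbf{A}}_{2\to2}\Abs{\mathbf{G}}_\infty$ from \cref{defG} and recognizing the definition of $B_\ell$ in \eqref{klbd}, yields the one-step linear recursion
\[
D_{\ell+1}\le\gamma\,B_\ell\,E+\gamma\,D_\ell.
\]
Iterating this from $\ell=0$ to $\ell=L-1$ with the initial value $D_0=0$ telescopes to $D_L\le E\sum_{\ell=0}^{L-1}\gamma^{L-\ell}B_\ell=K_L E$, which is exactly \eqref{perturbation}; formally one proves by induction the sharper statement $D_\ell\le E\sum_{j=0}^{\ell-1}\gamma^{\ell-j}B_j$. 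Strict positivity $K_L\in(0,\infty)$ for $L\ge1$ follows at once, since $\gamma=1+\Abs{\mathbf{G}}_\infty/\delta\ge1>0$ and each $B_\ell>0$ (as $\tau_\ell>0$ and $\mathbf{A}\neq0$).

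There is essentially no remaining obstacle: the two substantive estimates have already been isolated in \cref{defG} and \cref{perturb}, so the theorem is mostly bookkeeping for the recursion. The one point that genuinely requires care is that, in the inductive step, the factor $\Abs{f_{\DictionaryMatOne}^{\ell}(\mathbf{G})}_F$ must be replaced by the \emph{non-exponential} bound of \cref{defG} — which, as noted there, relies on using the boundedness of $\varphi_\delta'$ rather than its Lipschitz constant; allowing this factor to itself grow geometrically in $\ell$ would introduce a spurious extra exponential into $K_L$. The exponential dependence on $L$ that does appear is unavoidable and enters precisely through the geometric weights $\gamma^{L-\ell}$, since $\gamma>1$ whenever $\mathbf{G}\neq0$; this is the quantitative form of the exponential accumulation of parameter perturbations announced in the introduction.
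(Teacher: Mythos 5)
Your proposal is correct and follows essentially the same route as the paper: combine \cref{perturb} with the iterate bound of \cref{defG} to obtain the one-step recursion $D_{\ell+1}\le\gamma B_\ell E+\gamma D_\ell$, then unroll it by induction using $D_0=0$. The only cosmetic difference is that you anchor the induction at $L=0$ (empty sum) while the paper starts at $L=1$; both hinge on the same observation that the initialization is independent of $\mphi$.
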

  \begin{proof}
    Using first \cref{perturb} and then \cref{defG}, we get for $\ell\in\mathbb{N}_0$
    \begin{align}
      &\Abs{f_{\DictionaryMatOne}^{\ell+1}(\mathbf{G}) - f_{\DictionaryMatTwo}^{\ell+1}(\mathbf{G})}_F 
    \leq \gamma \left(
      \Abs{f_{\DictionaryMatOne}^{\ell}(\mathbf{G}) - f_{\DictionaryMatTwo}^{\ell}(\mathbf{G})}_F 
    + \frac{2\tau_\ell}{KN} \Abs{\mathbf{A}}_{2 \to 2} \Abs{\mathbf{A}\DictionaryMatOne - \mathbf{A}\DictionaryMatTwo}_{2 \to 2} \cdot \Abs{f_{\DictionaryMatOne}^{\ell}(\mathbf{G})}_F \right)\notag\\
    &\leq \gamma \left(
      \Abs{f_{\DictionaryMatOne}^{\ell}(\mathbf{G}) - f_{\DictionaryMatTwo}^{\ell}(\mathbf{G})}_F 
    + \frac{2\tau_\ell}{KN} \Abs{\mathbf{A}}_{2 \to 2} \Abs{\mathbf{A}\DictionaryMatOne - \mathbf{A}\DictionaryMatTwo}_{2 \to 2} \cdot \brac{\sqrt{m} + \frac{\mathcal{T}_\ell}{KN} \Abs{\mathbf{A}}_{2 \to 2}\cdot \Abs{\G}_\infty} \right)\notag\\
    &=\gamma 
      \Abs{f_{\DictionaryMatOne}^{\ell}(\mathbf{G}) - f_{\DictionaryMatTwo}^{\ell}(\mathbf{G})}_F 
    + \gamma B_\ell \Abs{\mathbf{A}\DictionaryMatOne - \mathbf{A}\DictionaryMatTwo}_{2 \to 2}\label{eq:PerturbStep}
    \end{align}
    We proceed by induction on $L \in \mathbb{N}_0$. 
    For $L = 1$, using \eqref{eq:PerturbStep} with $\ell=L-1=0$, we have
    \begin{align*}
      \Abs{f_{\DictionaryMatOne}^{1}(\mathbf{G}) - f_{\DictionaryMatTwo}^{1}(\mathbf{G})}_F 
    &\leq \gamma \Abs{f_{\DictionaryMatOne}^{0}(\mathbf{G}) - f_{\DictionaryMatTwo}^{0}(\mathbf{G})}_F + \gamma B_0 \Abs{\mathbf{A}\DictionaryMatOne - \mathbf{A}\DictionaryMatTwo}_{2 \to 2}\\
    &=\gamma B_0 \Abs{\mathbf{A}\DictionaryMatOne - \mathbf{A}\DictionaryMatTwo}_{2 \to 2}.
    \end{align*}
    Here, we have used $f^0_{\DictionaryMatOne}(\mathbf{G}) = \mathbf{f} = f_{\DictionaryMatTwo}^{1}(\mathbf{G})$.
    
    Since $\mathcal{T}_1=\tau_0$ and $K_1 = \gamma B_0$, this shows the statement for $L=1$.
    
    Assume now that the inequality holds for some $ L \in \mathbb{N}_0 $, i.e.,
    \[
      \Abs{f_{\DictionaryMatOne}^{L}(\mathbf{G}) - f_{\DictionaryMatTwo}^{L}(\mathbf{G})}_F 
    \leq \sum_{\ell=0}^{L-1}\gamma^{L-\ell}B_\ell \Abs{\mathbf{A}\DictionaryMatOne - \mathbf{A}\DictionaryMatTwo}_{2 \to 2}.
    \]
    Using \eqref{eq:PerturbStep} with $\ell=L=0$ and then the inductive hypothesis, we have
    \begin{align*}
      \Abs{f_{\DictionaryMatOne}^{L+1}(\mathbf{G}) - f_{\DictionaryMatTwo}^{L+1}(\mathbf{G})}_F 
    &\leq\gamma 
      \Abs{f_{\DictionaryMatOne}^{L}(\mathbf{G}) - f_{\DictionaryMatTwo}^{L}(\mathbf{G})}_F 
    + \gamma B_L \Abs{\mathbf{A}\DictionaryMatOne - \mathbf{A}\DictionaryMatTwo}_{2 \to 2}\\
&\leq\gamma 
       \sum_{\ell=0}^{L-1}\gamma^{L-\ell}B_\ell \Abs{\mathbf{A}\DictionaryMatOne - \mathbf{A}\DictionaryMatTwo}_{2 \to 2} 
    + \gamma B_L \Abs{\mathbf{A}\DictionaryMatOne - \mathbf{A}\DictionaryMatTwo}_{2 \to 2}\\
    &= \brac{
       \sum_{\ell=0}^{L-1}\gamma^{L+1-\ell}B_\ell + \gamma B_L }\Abs{\mathbf{A}\DictionaryMatOne - \mathbf{A}\DictionaryMatTwo}_{2 \to 2}\\
    &= \brac{
       \sum_{\ell=0}^{L}\gamma^{L+1-\ell}B_\ell}\Abs{\mathbf{A}\DictionaryMatOne - \mathbf{A}\DictionaryMatTwo}_{2 \to 2}\qedhere
    \end{align*}
    \end{proof}
\section{Generalization Error Bound via Covering Numbers}\label{sec:gen-error-bound}

In this section, we derive a generalization error bound for the hypothesis class $\mathcal{H}_2^L$, closely following the technique from \cite{BeRaSc22}. The approach relies on bounding the covering numbers of
\begin{equation}\label{def:M_2}
    \mathcal{M}_2 \coloneqq \big\{ \sigma \big(\mathbf{\Psi} f_\mphi^L(\mathbf{G})\big) \in \mathbb{C}^{K\numpixel \times \numsample}: \mathbf{\Psi, \Phi} \in \mathcal{U}(\numpixel) \big\}.
    \end{equation}
which represents the family of network outputs indexed by unitary parameters. We begin by quantifying how changes in the parameters affect the outputs in $\mathcal{M}_2$.\\

The following corollary provides a Lipschitz-type inequality showing that the Frobenius norm of output differences in $\mathcal{M}_2$ can be bounded in terms of the distances between parameter matrices:

\begin{corollary}\label{Cordis}
  Let $L \geq 1$, and assume that \cref{assum1} holds for all $\tau_\ell$ with $\ell = 0, \ldots, L-1$. 
Moreover let $\mathcal{R} \in \Gamma_0(\mathbb{C})$ with $0 \in \argmin_{x \in \mathbb{R}^n} \mathcal{R}(x)$
Then, for any $\DictionaryPsiOne, \DictionaryPsiTwo \in \mathcal{U}(\numpixel)$ and $\DictionaryMatOne, \DictionaryMatTwo \in \mathcal{U}(\numpixel)$, the following estimate holds:
\[
\left\|\sigma\brac{\DictionaryPsiOne f_{\DictionaryMatOne}^L(\mathbf{G})}
   -\sigma\brac{\DictionaryPsiTwo f_{\DictionaryMatTwo}^L(\mathbf{G})}\right\|_F 
   \leq M_L \left\|\DictionaryPsiOne - \DictionaryPsiTwo\right\|_{2 \rightarrow 2}
   + M'_L \left\|\mathbf{A}\DictionaryMatOne - \mathbf{A}\DictionaryMatTwo\right\|_{2 \to 2}.
\]
  with
  \begin{equation}\label{mlbd}
  M_L=\sqrt{m} + \frac{\mathcal{T}_L}{KN} \Abs{\mathbf{A}}_{2 \to 2} \cdot \Abs{\G}_\infty,\quad M'_L = K_L
  \end{equation}
  with $K_L$ and $\mathcal{T}_L$ from \cref{MainTheorem: Complex}.
  \end{corollary}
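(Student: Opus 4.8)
The plan is to derive the estimate as a short bookkeeping combination of two results already at hand: the iterate bound of \cref{defG} and the dictionary-perturbation bound of \cref{MainTheorem: Complex}. The first step is to remove the outer nonlinearity. The map $\sigma$ from \eqref{sigmacases} is $1$-Lipschitz on $\mathbb{C}^N$, and on matrices it acts column-wise; since $\Abs{\cdot}_F^2$ is the sum of the squared column $\ell_2$-norms, $\sigma$ is therefore also $1$-Lipschitz with respect to $\Abs{\cdot}_F$. Hence
\[
\Abs{\sigma\brac{\DictionaryPsiOne f_{\DictionaryMatOne}^L(\mathbf{G})} - \sigma\brac{\DictionaryPsiTwo f_{\DictionaryMatTwo}^L(\mathbf{G})}}_F \le \Abs{\DictionaryPsiOne f_{\DictionaryMatOne}^L(\mathbf{G}) - \DictionaryPsiTwo f_{\DictionaryMatTwo}^L(\mathbf{G})}_F.
\]

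Next I would decouple the two unitary parameters by inserting the cross term $\DictionaryPsiTwo f_{\DictionaryMatOne}^L(\mathbf{G})$ and using the triangle inequality, which splits the right-hand side into $\Abs{(\DictionaryPsiOne - \DictionaryPsiTwo) f_{\DictionaryMatOne}^L(\mathbf{G})}_F$ and $\Abs{\DictionaryPsiTwo\brac{f_{\DictionaryMatOne}^L(\mathbf{G}) - f_{\DictionaryMatTwo}^L(\mathbf{G})}}_F$. For the first summand, applying \eqref{eq:first} to each column and summing the squares gives $\Abs{(\DictionaryPsiOne - \DictionaryPsiTwo) f_{\DictionaryMatOne}^L(\mathbf{G})}_F \le \Abs{\DictionaryPsiOne - \DictionaryPsiTwo}_{2\to2}\,\Abs{f_{\DictionaryMatOne}^L(\mathbf{G})}_F$, and then \cref{defG} evaluated at $\ell = L$ — this is exactly where the hypotheses $\mathcal{R}\in\Gamma_0(\mathbb{C})$ with $0\in\argmin_x\mathcal{R}(x)$ and \cref{assum1} for $\tau_0,\dots,\tau_{L-1}$ enter — bounds $\Abs{f_{\DictionaryMatOne}^L(\mathbf{G})}_F$ by $\sqrt{m} + \tfrac{1}{KN}\Abs{\mathbf{A}}_{2\to2}\Abs{\mathbf{G}}_\infty\sum_{k=0}^{L-1}\tau_k = M_L$. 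For the second summand, unitary invariance of the Frobenius norm gives $\Abs{\DictionaryPsiTwo\brac{f_{\DictionaryMatOne}^L(\mathbf{G}) - f_{\DictionaryMatTwo}^L(\mathbf{G})}}_F = \Abs{f_{\DictionaryMatOne}^L(\mathbf{G}) - f_{\DictionaryMatTwo}^L(\mathbf{G})}_F$, and \cref{MainTheorem: Complex} (applicable by the hypothesis $L\ge1$) bounds this by $K_L\,\Abs{\mathbf{A}\DictionaryMatOne - \mathbf{A}\DictionaryMatTwo}_{2\to2}$, so that $M'_L = K_L$. Adding the two bounds yields the claimed inequality.

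I do not expect a genuine obstacle: the argument is essentially arithmetic once the two auxiliary results are combined. The only points that need a line of justification are (i) that passing from vectors to column-wise action preserves both the $1$-Lipschitz property of $\sigma$ and the operator-norm estimate \eqref{eq:first} in the Frobenius norm, and (ii) that \cref{defG} must be invoked at index $\ell = L$, so that the partial sum appearing in the constant is $\mathcal{T}_L = \sum_{k=0}^{L-1}\tau_k$ and the bound matches $M_L$ as defined in \eqref{mlbd} — not the shorter sum $\mathcal{T}_{L-1}$ that would come from using index $L-1$. Unitary invariance of $\Abs{\cdot}_F$ takes care of the $\DictionaryPsiTwo$-factor in the second summand cleanly.
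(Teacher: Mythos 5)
Your proposal is correct and follows essentially the same route as the paper's proof: strip $\sigma$ by its $1$-Lipschitz property, insert the cross term $\DictionaryPsiTwo f_{\DictionaryMatOne}^L(\mathbf{G})$, bound the first summand via the operator norm and \cref{defG} at $\ell=L$, and the second via unitary invariance and \cref{MainTheorem: Complex}. Your extra remarks on the column-wise Frobenius-norm bookkeeping and on using index $L$ (not $L-1$) are sound and, if anything, slightly more explicit than the paper's write-up.
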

  \begin{proof}
  By applying the 1-Lipschitz continuity of the function $\sigma$ (see \eqref{sigmacases}), 
together with the triangle inequality, and using that 
$\DictionaryPsiOne, \DictionaryPsiTwo \in \mathcal{U}(\numpixel)$, 
as well as \eqref{out1} and \eqref{perturbation}, we obtain

  \begin{equation*}
  \begin{split}
  &\left\|\sigma\brac{\DictionaryPsiOne f_{\DictionaryMatOne}^L(\mathbf{G})}-\sigma\brac{\DictionaryPsiTwo f_{\DictionaryMatTwo}^L(\mathbf{G})}\right\|_F 
  \leq\left\|\DictionaryPsiOne f_{\DictionaryMatOne}^L(\mathbf{G})-\DictionaryPsiTwo f_{\DictionaryMatTwo}^L(\mathbf{G})\right\|_F \\
  & \leq\left\|\DictionaryPsiOne f_{\DictionaryMatOne}^L(\mathbf{G})-\DictionaryPsiTwo f_{\DictionaryMatOne}^L(\mathbf{G})\right\|_F+\left\|\DictionaryPsiTwo f_{\DictionaryMatOne}^L(\mathbf{G})-\DictionaryPsiTwo f_{\DictionaryMatTwo}^L(\mathbf{G})\right\|_F \\
  &=\left\|\brac{\DictionaryPsiOne-\DictionaryPsiTwo} f_{\DictionaryMatOne}^L(\mathbf{G})\right\|_F+\left\|\DictionaryPsiTwo\brac{f_{\DictionaryMatOne}^L(\mathbf{G})-f_{\DictionaryMatTwo}^L(\mathbf{G})}\right\|_F \\
  & \leq\left\|\DictionaryPsiOne-\DictionaryPsiTwo\right\|_{2 \rightarrow 2}\left\|f_{\DictionaryMatOne}^L(\mathbf{G})\right\|_F+\left\|f_{\DictionaryMatOne}^L(\mathbf{G})-f_{\DictionaryMatTwo}^L(\mathbf{G})\right\|_F \\
  & \leq\brac{\sqrt{m} + \frac{\mathcal{T}_L}{KN} \Abs{\mathbf{A}}_{2 \to 2} \cdot \Abs{\G}_\infty}\left\|\DictionaryPsiOne-\DictionaryPsiTwo\right\|_{2 \rightarrow 2} + K_L \Abs{\mathbf{A}\DictionaryMatOne - \mathbf{A}\DictionaryMatTwo}_{2 \to 2}.\qedhere
  \end{split}
  \end{equation*}
  \end{proof}
 Using the inequality from Corollary~\ref{Cordis}, we can define a metric on the parameter space $\mathcal{U}(\numpixel) \times \mathcal{U}(\numpixel)$ by appropriately scaling the operator norms. %
Let
\[
d_1(\DictionaryPsiOne,\DictionaryPsiTwo) \coloneqq M_L\left\|\DictionaryPsiOne-\DictionaryPsiTwo\right\|_{2 \rightarrow 2},
\quad
d_2(\DictionaryMatOne,\DictionaryMatTwo) \coloneqq M'_L\Abs{\mathbf{A}\DictionaryMatOne - \mathbf{A}\DictionaryMatTwo}_{2 \to 2}.
\]
Since $\left\|.\right\|_{2 \to 2}$ is a norm and $M_L, M'_L \geq 0$, the function
\[
d\big((\DictionaryMatOne, \DictionaryPsiOne), (\DictionaryMatTwo, \DictionaryPsiTwo)\big)
\coloneqq d_1(\DictionaryPsiOne,\DictionaryPsiTwo) + d_2 (\DictionaryMatOne,\DictionaryMatTwo)
\]
defines a metric on $\mathcal{U}(\numpixel) \times \mathcal{U}(\numpixel)$. Therefore, $(\mathcal{U}(\numpixel) \times \mathcal{U}(\numpixel), d)$ is a metric space. Moreover, the Frobenius norm between any two elements of $\mathcal{M}_2$ can be bounded above by this metric:
\begin{equation}\label{dist}
  \begin{split}
 \left\|m_1 - m_2\right\|_F & = \left\|\sigma\brac{\DictionaryPsiOne f_{\mathbf{\Phi}_1}^L(\mathbf{G})}-\sigma\brac{\DictionaryPsiTwo f_{\mathbf{\Phi}_2}^L(\mathbf{G})}\right\|_F \\ &\leq M_L\left\|\DictionaryPsiOne-\DictionaryPsiTwo\right\|_{2 \rightarrow 2} + M'_L \Abs{\mathbf{A}\DictionaryMatOne - \mathbf{A}\DictionaryMatTwo}_{2 \to 2}\\ &= d_1(\DictionaryPsiOne\;,\;\DictionaryPsiTwo) + d_2(\DictionaryMatOne\;,\;\DictionaryMatTwo)
 = d((\DictionaryMatOne, \DictionaryPsiOne), (\DictionaryMatTwo, \DictionaryPsiTwo))
  \end{split}
\end{equation}
In this sense, the distance on $\mathcal{M}_2$ can be controlled by the metric $d$ on $\mathcal{U}(\numpixel)\times \mathcal{U}(\numpixel)$.

From this, we immediately obtain that the covering numbers of $\mathcal{M}_2$ under the Frobenius norm can be upper bounded by those of its parameter space:
\begin{equation}\label{rem60}
  \mathcal{N}\left(\mathcal{M}_2,\; \lVert \cdot \rVert_{F},\; \varepsilon\right) 
  \leq \mathcal{N}\left(\mathcal{U}(\numpixel) \times \mathcal{U}(\numpixel),\; d,\; \varepsilon\right).
\end{equation}
To see this, let $n:= \mathcal{N}\brac{\mathcal{U}(\numpixel)\times \mathcal{U}(\numpixel)\;,\; d\;,\; \varepsilon}$. Then, there exists $\big(\mathbf{\Psi}_i\;,\;\mphi_i\big)_{i = 1}^{n}$ such that
\[
\mathcal{U}(\numpixel)\times \mathcal{U}(\numpixel) \subset \bigcup_{i = 1}^{n}\mathcal{B}^{d}_{\epsilon}\big((\mathbf{\Psi}_i\;,\;\mphi_i)\big).
\]
Let $m \in M_2$. Then, there exists $\big(\mathbf{\Psi}\;,\;\mphi\big) \in \mathcal{U}(\numpixel)\times \mathcal{U}(\numpixel)$ such that $m = \sigma\brac{\mathbf{\Psi} f_{\mphi}^L(\mathbf{G})}$. Furthermore, there exists a $j\in\{1,\ldots,n\}$ with 
\[
d\big((\mathbf{\Psi}, \mathbf{\Psi}_j),(\mphi, \mphi_j)\big)\leq \epsilon.
\]
Now, using \eqref{dist}, we obtain
\[
\|m - m_j\|_F = \|\sigma\brac{\mathbf{\Psi} f_{\mphi}^L(\mathbf{G})} - \sigma\brac{\mathbf{\Psi}_j f_{\mphi_j}^L(\mathbf{G})}\| 
\leq d\big((\mathbf{\Psi}, \mathbf{\Psi}_j),(\mphi, \mphi_j)\big) \leq \epsilon.
\] 
Thus, $m \in \bigcup_{i = 1}^{n}\mathcal{B}^{\left\|.\right\|_F}_{\epsilon}(m_i)$ where $m_i = \sigma\brac{\mathbf{\Psi}_i f_{\mphi_i}^L(\mathbf{G})}$. Since $m \in M_2$, was arbitrary, we have $M\subset \bigcup_{i = 1}^{n}\mathcal{B}^{\left\|.\right\|_F}_{\epsilon}(m_i)$ and consequently
\[
\mathcal{N}\bigg(\mathcal{M}_2\;,\; \lVert \cdot \rVert_{F}\;,\; \varepsilon\bigg) 
\leq n = \mathcal{N}\bigg(\mathcal{U}(\numpixel)\times \mathcal{U}(\numpixel)\;,\; d\;,\; \varepsilon\bigg).
\]

We now derive an explicit upper bound on the covering numbers of $\mathcal{M}_2$ based on the metric properties established above.

 \begin{lemma}[Covering Number Bound for \texorpdfstring{$\mathcal{M}_2$}{}]\label{lem:covering_M2}
  Let the assumptions of \cref{Cordis} be fulfilled. Then,
\begin{equation*}
  \log \left( \mathcal{N}(\mathcal{M}_2, \lVert \cdot \rVert_{2 \to 2}, \varepsilon) \right) \leq 2\numpixel^2 \log \left(1 + \frac{4 M_L}{\varepsilon} \right)  +2K \numpixel^2 \log \left(1 + \frac{4 M'_L\lVert \mathbf{A} \rVert_{2\to 2}}{\varepsilon} \right).
\end{equation*}
\end{lemma}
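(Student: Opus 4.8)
The plan is to bound the covering number of $\mathcal{M}_2$ by that of the parameter space $\mathcal{U}(N)\times\mathcal{U}(N)$ equipped with the metric $d$, which has already been established in \eqref{rem60}. So the task reduces to bounding $\mathcal{N}(\mathcal{U}(N)\times\mathcal{U}(N), d, \varepsilon)$, where $d$ is the sum of the two scaled operator-norm pseudometrics $d_1(\DictionaryPsiOne,\DictionaryPsiTwo)=M_L\|\DictionaryPsiOne-\DictionaryPsiTwo\|_{2\to2}$ and $d_2(\DictionaryMatOne,\DictionaryMatTwo)=M'_L\|\mathbf{A}\DictionaryMatOne-\mathbf{A}\DictionaryMatTwo\|_{2\to2}$.

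First I would observe that a product of covers gives a cover of the product: if $\{\mathbf{\Psi}_i\}$ is an $\varepsilon/2$-cover of $\mathcal{U}(N)$ in the $d_1$-metric and $\{\mathbf{\Phi}_j\}$ is an $\varepsilon/2$-cover of $\mathcal{U}(N)$ in the $d_2$-metric, then $\{(\mathbf{\Psi}_i,\mathbf{\Phi}_j)\}$ is an $\varepsilon$-cover of the product under $d$, so
\[
\mathcal{N}(\mathcal{U}(N)\times\mathcal{U}(N), d, \varepsilon)\le \mathcal{N}\!\brac{\mathcal{U}(N), M_L\|\cdot\|_{2\to2}, \tfrac{\varepsilon}{2}}\cdot \mathcal{N}\!\brac{\mathcal{U}(N), M'_L\|\mathbf{A}\cdot\|_{2\to2}, \tfrac{\varepsilon}{2}}.
\]
The first factor is $\mathcal{N}(\mathcal{U}(N), \|\cdot\|_{2\to2}, \varepsilon/(2M_L))$. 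Since $\mathcal{U}(N)$ sits inside the operator-norm unit ball of $\mathbb{C}^{N\times N}$ (a real vector space of dimension $2N^2$), a standard volumetric covering argument gives $\mathcal{N}(\mathcal{U}(N),\|\cdot\|_{2\to2},\eta)\le(1+2/\eta)^{2N^2}$; with $\eta=\varepsilon/(2M_L)$ this yields $(1+4M_L/\varepsilon)^{2N^2}$. For the second factor, I would note $\|\mathbf{A}\DictionaryMatOne-\mathbf{A}\DictionaryMatTwo\|_{2\to2}\le\|\mathbf{A}\|_{2\to2}\|\DictionaryMatOne-\DictionaryMatTwo\|_{2\to2}$, so an $\varepsilon/(2M'_L\|\mathbf{A}\|_{2\to2})$-cover of $\mathcal{U}(N)$ in $\|\cdot\|_{2\to2}$ is an $\varepsilon/(2M'_L)$-cover in the $\|\mathbf{A}\cdot\|_{2\to2}$ pseudometric, giving the bound $(1+4M'_L\|\mathbf{A}\|_{2\to2}/\varepsilon)^{2KN^2}$ — matching the $2KN^2$ exponent in the claim (the $K$ enters because $\mathbf{A}\in\mathbb{C}^{KN\times N}$, though the covering is still over the $2N^2$-real-dimensional set $\mathcal{U}(N)$; I would double-check whether the intended argument instead covers the image $\mathbf{A}\,\mathcal{U}(N)\subset\mathbb{C}^{KN\times N}$ directly, which is what naturally produces the factor $K$ in the exponent).

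Taking logarithms and using $\log$ of the product equals the sum of the logs then gives
\[
\log\mathcal{N}(\mathcal{M}_2,\|\cdot\|_F,\varepsilon)\le 2N^2\log\!\brac{1+\frac{4M_L}{\varepsilon}}+2KN^2\log\!\brac{1+\frac{4M'_L\|\mathbf{A}\|_{2\to2}}{\varepsilon}},
\]
which is the stated bound (noting the lemma statement writes $\|\cdot\|_{2\to2}$ for the norm on $\mathcal{M}_2$, presumably a typo for $\|\cdot\|_F$, consistent with \eqref{rem60} and \cref{Cordis}).

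The main obstacle, and the step deserving the most care, is pinning down the correct covering-number estimate for $\mathcal{U}(N)$ and, in particular, justifying the factor $K$ in the second exponent. The clean statement is that $\mathcal{U}(N)$, viewed as a subset of $(\mathbb{C}^{N\times N},\|\cdot\|_{2\to2})$, is a subset of the unit ball of a $2N^2$-dimensional real normed space, hence has covering number at most $(1+2/\eta)^{2N^2}$ by the standard volume-comparison lemma. If the intended argument instead passes through the set $\{\mathbf{A}\mathbf{\Phi}:\mathbf{\Phi}\in\mathcal{U}(N)\}\subset\mathbb{C}^{KN\times N}$ and covers that directly in $\|\cdot\|_{2\to2}$ (a $2KN^2$-real-dimensional ambient space, with the relevant set bounded in norm by $\|\mathbf{A}\|_{2\to2}$), then the exponent $2KN^2$ arises immediately and transparently; I would present it this way to make the factor $K$ natural. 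Everything else — the product-cover bound, the change of scale in $\varepsilon$, and the step to $\log$ — is routine.
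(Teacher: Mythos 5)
Your proposal is correct and follows the paper's overall route (reduce to the parameter space via \eqref{rem60}, split with a product-cover argument, then apply the volumetric bound from \cite[Proposition~C.3]{FoRa13} with doubled dimension for the complex-to-real identification). The one place you diverge is the second factor, and your instinct there is right: the paper does \emph{not} cover $\mathcal{U}(\numpixel)$ and push forward through the Lipschitz map $\mphi \mapsto \mathbf{A}\mphi$; it covers the image set $\mathcal{W}=\{\mathbf{A}\mphi : \mphi\in\mathcal{U}(\numpixel)\}$ directly as a subset of the ball of radius $\Abs{\mathbf{A}}_{2\to2}$ in $\mathbb{C}^{K\numpixel\times\numpixel}$, whose real dimension $2K\numpixel^2$ is exactly where the factor $K$ in the exponent comes from. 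Your Lipschitz-transfer argument is also valid and in fact yields the \emph{sharper} exponent $2\numpixel^2$ on the second term (since the cover lives on the $2\numpixel^2$-real-dimensional set $\mathcal{U}(\numpixel)$, not on its image's ambient space); because $K\geq 1$, this implies the stated bound, so there is no gap --- you just should not write the exponent as $2K\numpixel^2$ when your own argument produces $2\numpixel^2$. The practical difference is that the paper's choice propagates a $\sqrt{K}$ factor into the Rademacher complexity and the final generalization bound in \cref{Th:Main_Result}, which your version would avoid. You are also right that the $\lVert\cdot\rVert_{2\to2}$ in the lemma statement is meant to be the Frobenius norm on $\mathcal{M}_2$, consistent with \eqref{rem60} and the proof of \cref{Th:Main_Result}.
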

\begin{proof}
We argue as follows. First estimate the covering number of the set
\[
\mathcal{W}= \{\mathbf{A \Phi}: \mphi \in \mathcal{U}(\numpixel)\} = \left \{\lVert \A \rVert_{2 \to 2} \cdot \frac{\A \mphi}{\lVert \A \rVert_{2 \to 2}}: \mphi \in \mathcal{U}(\numpixel) \right\} \subset \mathbb{C}^{KN \times \numpixel}.
\]
To control $\mathcal{N}(\mathcal{W}, \lVert \cdot \rVert_{2 \to 2}, \xi)$ for $\xi>0$, we extend \cite[Lemma~6]{BeRaSc22} to the complex case. Using the identity above for $\mathcal{W}$, we get
\begin{equation}
\label{eq:WCoverNum}
\mathcal{N}(\mathcal{W}, \lVert \cdot \rVert_{2 \to 2}, \xi) 
= \mathcal{N} \left( \left\{ \frac{\A \mphi}{\lVert \A \rVert_{2 \to 2}}: \mphi \in \mathcal{U}(\numpixel) \right\}, \lVert \cdot \rVert_{2 \to 2}, \tfrac{\xi}{\lVert \A \rVert_{2 \to 2}} \right) 
\leq \left( 1 + \frac{2 \lVert \mathbf{A} \rVert_{2 \to 2}}{\xi} \right)^{2K \numpixel^2}.
\end{equation}
The last inequality follows from $\mathcal{U}(\numpixel) \subset  B^{\numpixel \times \numpixel}_{\lVert \cdot \rVert_{2 \to 2}}:=\{B\in\mathbb{C}^{\numpixel\times\numpixel}:\lVert B \rVert_{2 \to 2}\leq1\}$ and \cite[Proposition~C.3]{FoRa13}. Since the latter is formulated for the real case, we identify $\mathbb{C}^{KN \times \numpixel}$ with $\mathbb{R}^{2\times KN \times \numpixel}$ to apply this proposition. Note that this doubles the dimension of the space, hence doubling the exponent in the covering bound.

Next, using \eqref{rem60} together with the product-space covering bound, we get
\begin{align*}
\mathcal{N}(\mathcal{M}_2, \lVert \cdot \rVert_F, \varepsilon) 
&\leq \mathcal{N}(\mathcal{U}(\numpixel)\times \mathcal{U}(\numpixel), d, \varepsilon)
= \mathcal{N}(\mathcal{U}(\numpixel)\times \mathcal{U}(\numpixel), d_1 + d_2, \varepsilon)\\
&\leq \mathcal{N}(\mathcal{U}(\numpixel), d_1, \tfrac{\varepsilon}{2})
\times \mathcal{N}(\mathcal{U}(\numpixel), d_2, \tfrac{\varepsilon}{2})\\
&= \mathcal{N}\left(\mathcal{U}(\numpixel), M_L \lVert \cdot \rVert_{2 \to 2}, \tfrac{\varepsilon}{2} \right) 
\times \mathcal{N}\left( \mathcal{W}, M'_L \lVert \cdot \rVert_{2 \to 2}, \tfrac{\varepsilon}{2} \right)\\
&\leq \mathcal{N}\left(\mathcal{U}(\numpixel), \lVert \cdot \rVert_{2 \to 2}, \tfrac{\varepsilon}{2 M_L} \right) 
\times \mathcal{N}\left( \mathcal{W}, \lVert \cdot \rVert_{2 \to 2}, \tfrac{\varepsilon}{2 M'_L} \right).
\end{align*}
Since $\mathcal{U}(\numpixel) \subset \mathcal{B}^{\numpixel \times \numpixel}_{\lVert \cdot \rVert_{2 \to 2}}$, the covering number bound from \cite[Proposition~C.3]{FoRa13} (doubling the dimension, since we apply it for complex spaces) gives for $\xi>0$
\[
\mathcal{N}(\mathcal{U}(\numpixel), \lVert \cdot \rVert_{2 \to 2}, \xi) \leq \left(1 + \frac{2}{\xi} \right)^{2\numpixel^2}
\]
Substituting $\xi = \tfrac{\varepsilon}{2 M_L}$ above and $\xi = \tfrac{\varepsilon}{2 M'_L}$ in \eqref{eq:WCoverNum}, we obtain
\[
\mathcal{N}(\mathcal{M}_2, \lVert \cdot \rVert_F, \varepsilon) 
\leq \left( 1 + \frac{4 M_L}{\varepsilon} \right)^{2\numpixel^2} 
\times \left( 1 + \frac{4 M'_L \lVert \A \rVert_{2 \to 2}}{\varepsilon} \right)^{2K \numpixel^2}.
\]
Taking logarithms gives the claimed result.
\end{proof}
Using the covering number bound from Lemma~\ref{lem:covering_M2}, we now present the main generalization bound for the hypothesis class $\mathcal{H}_2^L$, based on Rademacher complexity estimates. Here, the \emph{generalization error} is defined as the difference between the true risk of a hypothesis $h \in \mathcal{H}_2^L$ on unseen data and its empirical risk measured on the training set. This quantity measures how much worse a hypothesis $h \in \mathcal{H}_2^L$ may perform on unseen data compared to its training performance (see Subsection~2.1 of \cite{BeRaSc22}).

\begin{theorem}\label{Th:Main_Result}
Consider the hypothesis space $\mathcal{H}^L_2$ in \eqref{H: spaces}. Let $\alpha \in (0,1)$. With a probability at least $1 - \alpha$, for all $h \in \mathcal{H}^L_2$, the generalization error is bounded by
\begin{equation*}
\mathcal{L}(h) - \hat{\mathcal{L}}(h) \leq  2 \mathcal{R}_m(l \circ \mathcal{H}_2^L) + 4 (C_{\text{in}} + C_{\text{out}}) \sqrt{\frac{2 \log (\sfrac{4}{\alpha})}{m}}, 
\end{equation*}
where $C_{\text{in}}$ and $C_{\text{out}}$ are the constants defined in \cref{sec: unrolling}. 
The Rademacher complexity term is further bounded by
\begin{equation*}
\mathcal{R}_m\left(l \circ \mathcal{H}_2^{L}\right) \leq 4 C_\text{out} \frac{\numpixel}{\sqrt{\numsample}} \brac{\sqrt{\log \left( e \left( 1+ \frac{8 M_L}{\sqrt{m}C_\text{out} } \right) \right) }+ \sqrt{\log \left( e \left( 1+ \frac{8 \lVert \mathbf{A} M'_L \rVert_{ 2\to 2}}{\sqrt{m}C_\text{out} } \right) \right) }}, 
\end{equation*}
where $M_L$ and $M'_L$ are as in \eqref{mlbd}.  
\end{theorem}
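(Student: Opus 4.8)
The statement has two independent ingredients, and the plan is to handle them in turn. \emph{First inequality.} This is the standard Rademacher-complexity uniform-convergence bound, instantiated for our model. The loss, which compares the reconstruction $h(\mathbf g)=\sigma(\mathbf\Psi f_{\mphi}^L(\mathbf g))$ to the true exit wave $\pmb\psi$ (so $l(h,(\pmb\psi,\mathbf g))=\lVert h(\mathbf g)-\pmb\psi\rVert_2$), is bounded by $C_{\text{in}}+C_{\text{out}}$, because $h(\mathbf g)$ lies in the $\ell_2$-ball of radius $C_{\text{out}}$ by \eqref{sigmacases} and $\lVert\pmb\psi\rVert_2\le C_{\text{in}}$, and it is $1$-Lipschitz in its first argument. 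Applying McDiarmid's bounded-difference inequality to $\sup_{h\in\mathcal H_2^L}(\mathcal L(h)-\hat{\mathcal L}(h))$ — changing one training sample perturbs $\hat{\mathcal L}$ by at most $(C_{\text{in}}+C_{\text{out}})/m$ — together with the symmetrization inequality $\mathbb E\,\sup_h(\mathcal L(h)-\hat{\mathcal L}(h))\le 2\mathcal R_m(l\circ\mathcal H_2^L)$ yields the bound; the constants and the $\log(4/\alpha)$ come from the union bound over the concentration events, exactly as in Subsection~2.1 of \cite{BeRaSc22}.

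\emph{Bounding the Rademacher complexity.} Here the plan is a chaining argument over the output set $\mathcal M_2$ of \eqref{def:M_2}. For a fixed sample, the empirical Rademacher complexity of $l\circ\mathcal H_2^L$ equals $\tfrac1m\,\mathbb E_{\varepsilon}\sup_{\mathbf\Psi,\mphi}\sum_{j=1}^m\varepsilon_j\,l\!\big(\sigma(\mathbf\Psi f_{\mphi}^L(\mathbf g_j)),\pmb\psi_j\big)$, and the tuple $(\sigma(\mathbf\Psi f_{\mphi}^L(\mathbf g_j)))_{j=1}^m$ is exactly the column-wise object $\sigma(\mathbf\Psi f_{\mphi}^L(\mathbf G))\in\mathcal M_2$. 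Since $l$ and $\sigma$ are $1$-Lipschitz, the family $\{(l(\sigma(\mathbf\Psi f_{\mphi}^L(\mathbf g_j)),\pmb\psi_j))_{j=1}^m:\mathbf\Psi,\mphi\in\mathcal U(\numpixel)\}\subset\mathbb R^m$ has, at every scale $\varepsilon$, an $\ell_2$-covering number no larger than $\mathcal N(\mathcal M_2,\lVert\cdot\rVert_F,\varepsilon)$, so Dudley's entropy integral gives, up to absolute constants and the usual $1/\sqrt m$ normalization, $\mathcal R_m(l\circ\mathcal H_2^L)\lesssim\tfrac1{\sqrt m}\int_0^{D}\sqrt{\log\mathcal N(\mathcal M_2,\lVert\cdot\rVert_F,\varepsilon)}\,d\varepsilon$, where $D$ is the Frobenius diameter of $\mathcal M_2$. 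Since every column of an element of $\mathcal M_2$ has $\ell_2$-norm at most $C_{\text{out}}$ by construction of $\sigma$, one may take $D\le 2\sqrt m\,C_{\text{out}}$.

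It then remains to insert \cref{lem:covering_M2}. Splitting the two logarithmic terms with $\sqrt{a+b}\le\sqrt a+\sqrt b$ and using the elementary estimate $\int_0^{D}\sqrt{\log(1+a/\varepsilon)}\,d\varepsilon\le D\sqrt{\log\!\big(e(1+a/D)\big)}$ (by Jensen's inequality for the square root and $\log(1+x)\le x$), each of the two terms of \cref{lem:covering_M2} contributes one summand of the form $\tfrac1{\sqrt m}\cdot\numpixel\cdot D\sqrt{\log(e(1+\mathrm{const}/D))}$; with $D\le 2\sqrt m\,C_{\text{out}}$ the $\sqrt m$ cancels against the diameter, and collecting constants produces the claimed bound, with the two arguments $e(1+8M_L/(\sqrt m\,C_{\text{out}}))$ and $e(1+8\lVert\mathbf A M'_L\rVert_{2\to2}/(\sqrt m\,C_{\text{out}}))$, where $M_L$ and $M'_L$ are the constants from \cref{Cordis} / \eqref{mlbd}.

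The main obstacle is the reduction in the second paragraph: recasting $\mathcal R_m(l\circ\mathcal H_2^L)$ as a chaining quantity purely over $\mathcal M_2$ in the Frobenius metric and — more delicately — pinning down the chaining cut-off and the normalizing constants so that the final prefactor comes out as exactly $4C_{\text{out}}\numpixel/\sqrt m$, rather than merely of that order; this requires the precise forms of Dudley's inequality and of the Lipschitz-composition (contraction) step used in \cite{BeRaSc22}. The remaining steps — the diameter estimate, the Jensen integral bound, and substituting \cref{lem:covering_M2} — are routine.
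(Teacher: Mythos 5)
Your proposal follows essentially the same route as the paper: the first inequality is the standard Rademacher uniform-convergence bound (the paper simply cites \cite{ShBe14}, Theorem 26.5, whose proof is exactly the McDiarmid-plus-symmetrization argument you sketch), and the Rademacher term is bounded via Dudley's entropy integral over $\mathcal{M}_2$ combined with \cref{lem:covering_M2}, the splitting $\sqrt{a+b}\le\sqrt a+\sqrt b$, and the estimate $\int_0^\alpha\sqrt{\log(1+\beta/\varepsilon)}\,d\varepsilon\le\alpha\sqrt{\log(e(1+\beta/\alpha))}$, just as the paper does (deferring the reduction to \cite{BeRaSc22}, Subsection 4.3, with integration limit $\sqrt{m}\,C_{\text{out}}/2$ and prefactor $4\sqrt2/m$). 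The one imprecision is your normalization $\tfrac{1}{\sqrt m}\int_0^{D}$ together with $D\le 2\sqrt m\,C_{\text{out}}$, which taken literally loses the $1/\sqrt m$ decay of the final bound — the Dudley prefactor here is $\tfrac{1}{m}$, so that the $\sqrt m$ in the cut-off only partially cancels — but you flag this yourself as needing the precise constants from \cite{BeRaSc22}, so it is a bookkeeping issue rather than a gap in the argument.
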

\begin{proof}
Noting that the loss in our case is bounded by $c = C_{\text{in}} + C_{\text{out}}$, the claimed upper bound for $\mathcal{L}(h) - \hat{\mathcal{L}}(h)$ follows from \cite[Theorem 26.5]{ShBe14}.
The bound for the Rademacher complexity is based on Dudley's inequality \cite[Theorem 8.23]{FoRa13}. Following the derivation of the bound derived in \cite[Subsection~4.3, (33)]{BeRaSc22}, one gets
\[
\mathcal{R}_m\brac{l \circ \mathcal{H}_2^{L}}
\;\leq\; \frac{4 \sqrt{2}}{\numsample} \int_0^{\sqrt{\numsample} \sfrac{C_{\text{out}}}{2}} 
\sqrt{\log \brac{\mathcal{N}\brac{\mathcal{M}_2, \lVert \cdot \rVert_{ 2 \to 2}, \varepsilon}}} 
\,d\varepsilon.
\]
Using this, together with \cref{lem:covering_M2} and $\sqrt{a+b}\leq\sqrt{a}+\sqrt{b}$ for $a,b\geq0$, we get
\begin{equation*}
  \begin{split} \mathcal{R}_m\brac{l \circ \mathcal{H}_2^{L}} &\leq \frac{4 \sqrt{2}}{\numsample} \int_0^{\sqrt{\numsample} \sfrac{C_{\text{out}}}{2}} \sqrt{\log \brac{\mathcal{N}\brac{\mathcal{M}_2, \lVert \cdot \rVert_{ 2 \to 2}, \varepsilon}}} \,d\varepsilon\\
     &\leq \frac{4 \sqrt{2}}{\numsample} \int_0^{\sqrt{\numsample} \sfrac{C_{\text{out}}}{2}} \sqrt{2\numpixel^2 \log \brac{1 + \frac{4 M_L}{\varepsilon}}} \,d\varepsilon \\
      &\quad + \frac{4 \sqrt{2}}{\numsample} \int_0^{\sqrt{\numsample} \sfrac{C_{\text{out}}}{2}} \sqrt{2K\numpixel^2 \log \brac{1 + \frac{4M'_L \Abs{\mathbf{A}}_{2\to 2}}{\varepsilon}}} \,d\varepsilon\\
       &\leq 4 C_{\text{out}} \frac{\numpixel}{\sqrt{\numsample}} \sqrt{\log \brac{e \brac{1 + \frac{4 M_L}{\sqrt{\numsample} \sfrac{C_{\text{out}}}{2}}}}} \\
        &\quad + 4 C_{\text{out}} \sqrt{K}\frac{\numpixel}{\sqrt{\numsample}} \sqrt{\log \brac{e \brac{1 + \frac{4 M'_L\Abs{\mathbf{A}}_{2\to 2}}{\sqrt{\numsample} \sfrac{C_{\text{out}}}{2}}}}},
  \end{split}
\end{equation*}
where the last inequality holds due to \cite[(47)]{BeRaSc22} (which slightly generalizes \cite[Lemma~C.9]{FoRa13}) with $\alpha=\sqrt{\numsample} \sfrac{C_{\text{out}}}{2}$ and $\beta=4 M_L$ or $\beta=4M'_L\Abs{\mathbf{A}}_{2\to 2}$.
\end{proof}

We conclude this section with a discussion of how the generalization bound scales with the network depth $L$, the parameter dimension $\numpixel^2$, the number of measurement operators $K$, and the number of training samples $m$.

\begin{remark}\label{rem:scaling}
$M_L$ grows (at most) linearly in $L$, while $M'_L$ scales exponentially in $L$. This can be seen as follows. Noting
\[
\mathcal{T}_L = \sum_{k=0}^{L-1} \tau_k \;\le\; L \max_{k \in [0, L-1]} \tau_k,
\]
$\mathcal{T}_L$ and thus $M_L$ grows (at most) linearly in $L$.
We recall
\[
K_L = \sum_{\ell=0}^{L-1} \gamma^{L-\ell}B_\ell\quad\text{with}\quad \gamma = 1 + \frac{\Abs{\mathbf{G}}_\infty}{\delta} >1,
\]
and note that $B_\ell$ depends linearly on $\mathcal{T}_\ell$. With the above, we get
$
B_\ell = \mathcal{O}(\ell).
$
Factoring out $\gamma^L$ in the expression for $K_L$ above gives
\[
K_L = \gamma^L \sum_{\ell=0}^{L-1} \gamma^{-\ell} B_\ell
= \gamma^L \sum_{\ell=0}^{L-1} \mathcal{O}\!\left( \frac{\ell}{\gamma^\ell} \right).
\]
Since $\ell/\gamma^\ell$ decays, the inner sum is bounded. Therefore,
$
K_L = \mathcal{O}(\gamma^L),
$
showing that $K_L=M'_L$ grows exponentially in $L$.

By isolating the dependence on $K,$ $\numpixel,$ $\numsample$ and $L$, and treating other terms as constants, the generalization error is bounded as
\begin{equation*}
\mathcal{L}(h) - \hat{\mathcal{L}}(h) \;\lesssim\; \frac{N}{\sqrt{m}}\sqrt{\log (L)} + \frac{N \sqrt{K}}{\sqrt{m}}\sqrt{L} \;\lesssim\; \frac{N(1 + \sqrt{K})}{\sqrt{m}} \sqrt{L}.
\end{equation*}
This reveals an overall square-root dependency on the depth $L$ (since the logarithmic terms are dominated by $\sqrt{L}$). Moreover, the dependency on the number of trainable parameters can be made explicit: since the dimension of the parameter space is $\dim \mathcal{U}(N) = N^2 = \mathcal{O}(N^2)$, the bound can equivalently be expressed in terms of the ratio of trainable parameters to samples as
\[
\mathcal{L}(h) - \hat{\mathcal{L}}(h) \;\lesssim\; \sqrt{\tfrac{P}{m}} (1+\sqrt{K})\sqrt{L}, \quad P \sim N^2.
\]
Hence, the error decreases at the standard rate $1/\sqrt{m}$ with the number of samples, while growing with the square root of the ratio $P/m$, the depth $L$, and the measurement count $K$.
This shows that any factor increase in $N^2$ must be compensated by the same factor increase in $m$ to preserve generalization performance. Hence, the number of samples must scale proportionally to the number of trainable parameters to preserve generalization performance.
\end{remark}
\begin{figure}[H]
  \centering
  \resizebox{\linewidth}{!}{\input{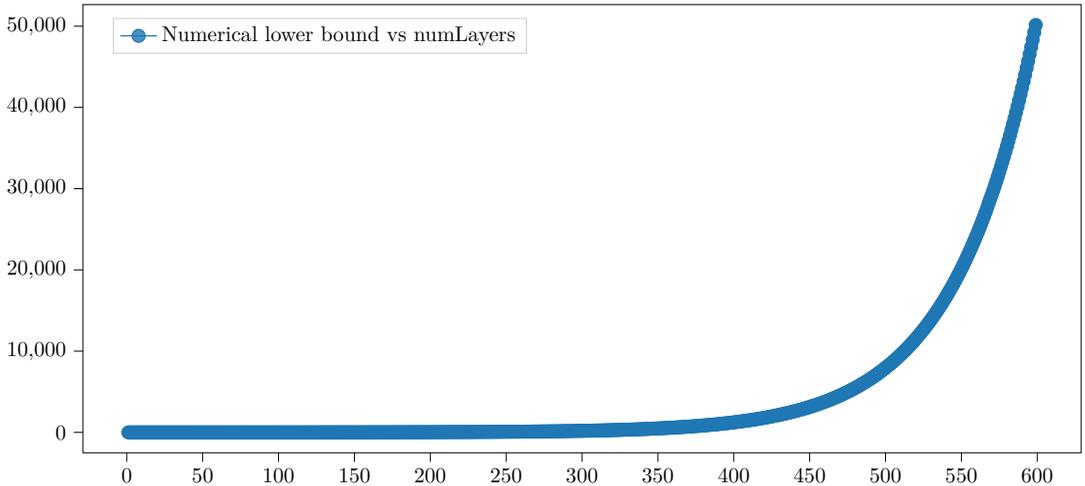}}
  \caption{Lower bound for $K_L$ computed by numerically maximizing the ratio $\frac{\lVert f_{\DictionaryMat_1}^{L}(\G) - f_{\DictionaryMat_2}^{L}(\G)\rVert_F}{\lVert {\DictionaryMat}_1 - {\DictionaryMat}_2 \rVert_{2 \to 2}}$ with respect to ${\DictionaryMat}_1$ and ${\DictionaryMat}_2$ for the case $N=2$, $K=1$. Here, the dictionary matrices were restricted to be rotations and the optimization was done over the rotation angles.}
  \label{fig:lipschitz-growth}
\end{figure}
As we have seen, the resulting generalization bound is linear in the number of layers $L$ (unlike the bound of \cite{BeRaSc22}, which is logarithmic in $L$). This deficiency comes from the pertubation bound, where $K_L$ grows exponentially in $L$ with our proof, not quadratically like in \cite{BeRaSc22}. Thus, the question arises whether this is an artifact of our proof or a inevitable consequence of the nonlinear forward model.
Figure~\ref{fig:lipschitz-growth} provides numerical evidence supporting the scaling with $L$ found in \cref{MainTheorem: Complex}. Without regularization ($\mathcal{R} = 0$), the numerically computed lower bound for the Lipschitz constant for network perturbations appears to grow exponentially with the network depth $L$, in agreement with the bound $K_L = \mathcal{O}(\gamma^L)$, where $\gamma = 1 + \Abs{G}_\infty / \delta$. 

Using $\mathcal{R}=\lambda\Abs{\cdot}_1$ and strong regularization (i.e., $\lambda$ large), it may be possible to get a logarithmic bound. The problem is that the forward step of the PGA is expanding in the vicinity of the origin.
This is due to $\varphi_\delta'$ having a large local Lipschitz constant $\sim 1/\delta$ only near the origin.
Since the backward step of the PGA with $\mathcal{R}=\lambda\Abs{\cdot}_1$ is soft thresholding, this could possibly be used to compensate for the behavior of the forward step near the origin.

Finally, we would like to point out that the generalization bound from \cite{BeRaSc22} for LISTA also follows from our approach. By choosing $\varphi(z)=z$, $\mathcal{R}=\lambda\Abs{\cdot}_1$ and restricting to the real case, \eqref{eq:EWObjective} is the objective from ISTA. In this case, one gets $\varphi'(z)=1$ and $\varphi\varphi'=\operatorname{Id}$, where $\operatorname{Id}$ denotes the identity mapping. Moreover, $\varphi\varphi'=\operatorname{Id}=\prox_0$ (the proximal operator of the constant zero mapping) and $\varphi'=\prox_{I_{\{1\}}}$ (the proximal operator of indicator function $I_{\{1\}}(z)=0$ for $z=1$ and $\infty$ else.). Since the Lipschitz constant of $\varphi'\equiv1$ is 0, one gets $\gamma=1$ in \cref{MainTheorem: Complex} which restores the quadratic growth of the pertubation bound in $L$ and thus the logarithmic scaling in $L$ of the generalization bound.

\section*{Acknowledgments}
Moussa Atwi and Benjamin Berkels were supported by the German research foundation (DFG) within the Collaborative Research Centre SFB 1481 ``Sparsity and Singular Structures'' (Project ID 442047500, Project A08).

\bibliographystyle{plain}
\bibliography{references.bib}

\end{document}